\theoremstyle{plain} 
\newtheorem{theorem}{Theorem}[section]
\newtheorem{lemma}[theorem]{Lemma}
\newtheorem{proposition}[theorem]{Proposition}
\theoremstyle{definition} 
\newtheorem{definition}[theorem]{Definition}
\theoremstyle{remark} 
\newtheorem{remark}[theorem]{Remark}
\newcommand{\cH}{\mathcal{H}}
\newcommand{\cE}{\mathcal{E}}
\newcommand{\cN}{\mathcal{N}}
\newcommand{\cB}{\mathcal{B}}
\newcommand{\cP}{\mathcal{P}}
\newcommand{\cC}{\mathcal{C}}
\newcommand{\bP}{\mathbb{P}}
\newcommand{\bE}{\mathbb{E}}
\newcommand{\R}{\mathbb{R}}
\newcommand{\ep}{\epsilon }
\newcommand{\si}{\sigma }
\newcommand{\ones}{\mathbbm{1}}
\newcommand{\one}{\mathbf{1}}
\newcommand{\cs}{\textbf{c}}
\newcommand{\Mod}{\operatorname{Mod}}
\newcommand{\Dom}{\operatorname{Dom}}
\newcommand{\Adm}{\operatorname{Adm}}
\newcommand{\Ext}{\operatorname{Ext}}
\newcommand{\MEO}{\operatorname{MEO}}
\newcommand{\co}{\operatorname{co}}
\newcommand{\cl}{\operatorname{cl}}
\newcommand{\bi}{\begin{itemize}}
\newcommand{\ei}{\end{itemize}}
\newcommand{\cBhat}{\widehat{\cB}}
\newcommand{\cBtil}{\widetilde{\cB}}
\newcommand{\lbr}{\left\{ }
\newcommand{\rbr}{\right\} }
\newcommand{\coni}{\operatorname{conic}}
\newcommand{\cI}{\mathcal{I}}
\numberwithin{equation}{section}
\date{}
\begin{document}
\title{\bf Matroid reinforcement and sparsification\thanks{This material is based upon work supported by the National Science Foundation under Grant n. 2154032.}}
\author[1]{Huy Truong}
\author[1]{Pietro Poggi-Corradini}
\affil[1]{\small Dept. of Mathematics, Kansas State University, Manhattan, KS 66506, USA.}

\maketitle 
\begin{abstract}
Homogeneous matroids are characterized by the property that strength equals fractional arboricity, and arise in the study of base modulus \cite{truong2024modulus}. For graphic matroids, Cunningham \cite{cunninghamoptimal} provided efficient algorithms for calculating graph strength, and also for determining minimum cost reinforcement to achieve a desired strength. This paper extends this latter problem by focusing on two optimal strategies for transforming a matroid into a homogeneous one, by either increasing or decreasing element weights. As an application to graphs, we give algorithms to solve this problem in the context of spanning trees.
\end{abstract}
\noindent {\bf Keywords:} Homogeneous matroids, uniformly dense matroids, modulus, strength, fractional arboricity, matroid reinforcement.

\vspace{0.1in}

\noindent {\bf 2020 Mathematics Subject Classification:} 05C85 (Primary) ; 90C27 (Secondary).
\section{Introduction}
The authors have studied the modulus of bases of matroids in \cite{truong2024modulus}, and have shown that base modulus is closely related to strength and fractional arboricity. Let us recall these notions. For a loopless matroid $M(E,\cI)$ on a ground set $E$ with a family of independent sets $\cI$ and a rank function $r$, let $\si \in \R^E_{>0}$ be the weights assigned to each element $e$ in $E$. For $X \subseteq E$, denote $\si(X) = \sum\limits_{e \in X} \si(e)$. Then, the {\it strength} of $M$ is defined as:
\begin{equation}\label{eq:strength-problem}
	S{_\si}(M) := \min \left\{ \frac{\si(X)}{r(E) - r(E-X)} : X \subseteq E, r(E) > r(E-X) \right\},
\end{equation}
and the {\it fractional arboricity} of $M$ is defined as:
\begin{equation}
	D_{\si}(M) := \max \left\{ \frac{\si(X)}{r(X)} : X \subseteq E, r(X)>0 \right\}.
\end{equation}
An unweighted matroid ($\si\equiv 1$) is said to be {\it homogeneous} if its strength equals its fractional arboricity. This class of matroids is also called {\it uniformly dense matroids} and it was studied extensively in the literature, see for instance \cite{catlin1992,pietrofairest,rucinski1986strongly}.
For an unweighted matroid $M$, let $\cB$ denote the family of all bases of $M$. The modulus of the base family $\cB$ is related to the minimum expected overlap ($\MEO$) problem of $\cB$ (see equation (\ref{eq:meo-unweighted})), the strength, the fractional arboricity, and the theory of principal partitions of matroids, see \cite{truong2024modulus}. 
The theory of principal partitions of graphs, matroids, and submodular systems has been developed over several decades. For an overview of this theory, we recommend the survey
paper \cite{fujishige2009theory}.  Furthermore, this theory has been generalized to weighted matroids (also see \cite{fujishige2009theory}).

In this paper, a weighted matroid $M$ with element weights $\si$ is said to be ($\si$)-{\it homogeneous} if
\begin{equation}\label{eq:homogeneous-matroid}
	S_{\si}(M) = D_{\si}(M).
\end{equation}
 In Section \ref{sec:base-weights}, we introduce the minimum expected weighted overlap problem for the base family of $M$ and generalize the results in \cite{truong2024modulus} for the case of weighted matroids. In doing this, we provide a definition for homogeneous matroids in the context of base modulus (see Definition \ref{def:homogeneous}), which is different from the one given in (\ref{eq:homogeneous-matroid}), but turns out to be equivalent. Moreover, we demonstrate that $M$ is homogeneous if and only if  $\si \in \coni (\cB)$, where $\coni (\cB)$ is the conical hull of $\cB$ (see Theorem \ref{thm:hom-coni}).

In the case of graphic matroids, let's consider an undirected, connected, and weighted graph $G= (V,E)$ with edge weights $\si \in \R^E_{>0}$. The \textit{graphic matroid} associated with the graph $G$ is the matroid $M=(E,\cI)$ where a subset $A \subseteq E$ is independent if and only if $A$ does not contain any cycles in $G$. 
For every subset $A \subset E$, its rank $f(A)$ is given by $|V(A)| - r_A$, where $V(A)$ is the set of vertices of the edge-induced subgraph $H_A$ induced by $A$, and $r_A$ is the number of connected components in $H_A$ \cite{edmondsgeedy}. Then,  the family of all spanning trees of the graph $G$ is the base family of the graphic matroid $M$ \cite{edmondsgeedy}. In \cite{cunninghamoptimal}, Cunningham  provides an efficient algorithm for computing the graph strength of $G$. Additionally, Cunningham studies the problem of strength reinforcement. Consider a scenario where increasing the weight $\si(e)$ of each edge $e \in E$ incurs a cost of $m(e)$ per unit increase. Given a threshold $s_0>0$, the {\it strength reinforcement} problem aims to find the most cost-effective method to enhance edge strengths, ensuring that the resulting graph's strength is at least $s_0$. This problem can be formulated as follows:

\begin{equation} \label{rein1}
	\begin{array}{ll}
		\underset{z \in  \R^{E}_{\geq 0}}{\text{minimize}}   & m\cdot z \\
		\text{subject to} &S_{\si+z}(G) \geq s_0 .
	 	 
	\end{array}
\end{equation}
 Cunningham \cite{cunninghamoptimal} proposes an efficient solution to this problem, which is based on a greedy algorithm for polymatroids. This requires solving $2|E|$ minimum cut problems.

This led us to study the following related problem.
Consider a weighted matroid $M=(E,\cI)$ with element weights $\si$ and assume that $M$ is not homogeneous. We are interested in identifying a minimum-cost strategy to increase edge weights  in order to transform the matroid $M$ into a homogeneous one.  Given a per-unit increasing cost $m(e) \geq 0 $ for each $e \in E$,  we introduce the {\it matroid reinforcement} problem 
\begin{equation}\label{eq:hmt1}
	\begin{array}{ll}
		\underset{ z \in \R^{E}_{\geq 0}}{\text{minimize}}   & m \cdot z \\
		\text{subject to} &\si+z \in \coni(\cB).
	 
	\end{array}
\end{equation}
Likewise, we study a minimum-cost strategy to decrease edge weights that gives rise to the following {\it matroid sparsification} problem
\begin{equation} \label{eq:hmt2}
	\begin{array}{ll}
		\underset{ z \in \R^{E}_{\geq 0}}{\text{minimize}}   & m \cdot z \\
		\text{subject to} &\si-z \in \coni(\cB).
		 
	\end{array}
\end{equation}

For a matroid $M(E,\cI)$ with the rank function $r$,  it is well-known that the rank function $r$ is a polymatroid function defined on subsets of $E$ (see Definition \ref{def:poly-func}). For any polymatroid function $f$, we associate to it a polyhedron $P_f$: 

\begin{equation}\label{eq:polymatroid}
	P_f:= \lbr x \in \R^E :x \geq 0 ,x(A) \leq f(A), \forall A \subseteq E \rbr.
\end{equation}

It has been shown that $P_f$ is a polymatroid, see Definition \ref{def:polymatroid}. We also define the base polytope $B_f$ of the polymatroid function $f$ as follows:
\begin{equation}\label{eq:basepolytope}
	B_f:= \lbr x \in \R^E :x \in P_f, x(E) = f(E) \rbr. 
\end{equation}

 If $f$ is the rank function of a matroid $M(E,\cI)$ with the base family $\cB$, Edmonds \cite[Theorem 39]{edmondsgeedy} showed that the set of vertices of the polymatroid $P_f$ associated with $f$ precisely corresponds to the set of incidence vectors of $\cI$. Edmonds \cite[Theorem 43]{edmondsgeedy} also demonstrated that the set of vertices of the base polytope $B_f$ is  the set of incidence vectors of $\cB$. In other words, the base polytope $B_f$ is the convex hull of incidence vectors of $\cB$. For any positive real number $h$, we define $	hB_f:= \lbr hx :x \in B_f \rbr. $ Then, we have \[\displaystyle \coni(\cB) = \bigcup_{h>0} hB_f.\]

Here are the main results of this paper.
Let $f$ be an arbitrary polymatroid function. In Section \ref{sec:rein-poly}, we introduce a generalized version of problem (\ref{eq:hmt1}), the so-called  {\it polymatroid reinforcement} problem: 

\begin{equation}\label{eq:rein-poly}
	\begin{array}{ll}
		\underset{ z \in \R^{E}_{\geq 0}}{\text{minimize}}   & m \cdot z \\
		\text{subject to} &\si+z \in hB_f  \text{ for some } h>0.
	\end{array}
\end{equation}
In Theorem \ref{thm:rein-main}, we show that the  polymatroid reinforcement problem  (\ref{eq:rein-poly}) is equivalent to 
\begin{equation}\label{eq:rein-poly-reduced}
	\begin{array}{ll}
		\underset{ z \in \R^{E}_{\geq 0}}{\text{minimize}}   & m \cdot z \\
		\text{subject to} &\si+z \in \alpha B_f,
	\end{array}
\end{equation}
where  
\begin{equation}\label{eq:alpha}
\alpha := \min \left\{ h>0: \si \in hP_f  \right\}.
\end{equation}

In Section \ref{sec:spar-poly}, we also introduce a generalized version of problem (\ref{eq:hmt2}),  the so-called {\it polymatroid sparsification} problem:
\begin{equation}\label{eq:spar-poly}
	\begin{array}{ll}
		\underset{ z \in \R^{E}_{\geq 0}}{\text{minimize}}   & m \cdot z \\
		\text{subject to} &\si-z \in hB_f  \text{ for some } h>0.
	\end{array}
\end{equation}
We define the set function $g$ as follows: $g(U):=f (E) - f (E \setminus U )$ for all subsets $U \subseteq E$. Then, we associate the following polyhedron with $g$:
 
 \begin{equation}\label{eq:contrapoly}
 	Q_g:= \lbr x \in \R^E :x \geq 0,x(A) \geq g(A), \forall A \subseteq E \rbr.
 \end{equation} In the literature, see for instance \cite{combinatorialoptimization}, the polyhedron $Q_g$ is known as a contrapolymatroid. We also define
 \begin{equation}\label{eq:base-contrapoly}
 	C_{g}:= \lbr x \in Q_{g} : x(E) = g(E)\rbr.
 \end{equation}
 It is well-known that $B_f = C_g$ \cite[Section 44.5]{combinatorialoptimization}. Let $c$ be a constant such that $c \geq \max \lbr f(E) ,   \Vert \si \Vert_{\infty}\rbr $ and  denote $\mathbf{c}$ to be the vector of all $c$. We introduce the following polytope: 
 \begin{equation}\label{eq:contrapoly-c}
 	Q_{g,c} := \left\{ x \in \mathbb{R}^E :  \mathbf{c} \geq x, x(A) \geq g(A), \forall A \subseteq E \right\}.
 \end{equation}
Here are the main results in this direction:

In Theorem \ref{thm:translation},
we define a map  $t$: $\R^{E} \rightarrow \R^{E}$ by $ t(x) = -x + \mathbf{c}$, $x \in \R^{E}$, and show that $t(Q_{g,c})$ is a polymatroid.
Using this, in Theorem \ref{thm:spar-main}, we show that  the  polymatroid  sparsification problem is equivalent to 
 \begin{equation}\label{eq:spar-poly-reduced}
 	\begin{array}{ll}
 		\underset{ z \in \R^{E}_{\geq 0}}{\text{minimize}}   & m \cdot z \\
 		\text{subject to} &\si-z \in \beta C_g,
 	\end{array}
 \end{equation}
 where 
 \begin{equation}\label{eq:beta}
 \beta := \max \left\{ h>0: \si \in hQ_{g}  \right\}.
 \end{equation}

When $f$ is the rank function of a matroid $M(E,\cI)$, using (\ref{eq:alpha}) and (\ref{eq:beta}), we get $\alpha =D_{\si}(M)$ and $\beta =S_{\si}(M)$. Therefore, we can generalize the relationship between strength and fractional arboricity (see Theorem \ref{thm:sd-matr}) in the context of polymatroids. In particular, in Theorem \ref{thm:hab}, we demonstrate that $\alpha  \geq \beta$, and $\si \in hB_f=hC_g$ if and only if $h = \alpha=\beta$.

Finally, Theorem \ref{thm:rein-main} shows that we can reduce the feasible set of the matroid reinforcement problem.
Moreover, the reduced feasible set contains every $z \in \R^E_{\geq 0}$ such that  $\si + z$ is maximal in $\alpha P_f$. Similarly, Theorem \ref{thm:spar-main} shows that we can reduce the feasible set of 
the matroid sparsification problem and the reduced feasible set contains
 $z \in \R^E_{\geq 0}$ such that $\sigma - z$ is minimal 
in $\beta Q_g$. Consequently, we may use the greedy algorithm to solve both problems. In Section \ref{sec:sec-mat-rein-spar}, we provide Algorithm \ref{al:hmt-in} and Algorithm \ref{al:hmt-de} for solving these two problems.

In Section \ref{sec:app-graph}, we apply Algorithm \ref{al:hmt-in} and Algorithm \ref{al:hmt-de} to the case of graphic matroids. In particular, we provide  detailed methods to implement Algorithm \ref{al:hmt-in} and \ref{al:hmt-de} using Cunningham's minimum-cut formulations. Furthermore, we provide Algorithm \ref{al0} to compute the fractional arboricity and, as a consequence, we provide Algorithm \ref{al5} for computing the spanning tree modulus using the fractional arboricity.

\section{Matroid base modulus}
\subsection{Preliminaries}
\subsubsection*{Matroid}
First, let us recall the definition of matroids. For a set $X$, we denote its
cardinality by $|X|$. If $Y$ is another set, then $X - Y$ represents
the relative complement of $Y$ in $X$.

\begin{definition}\label{def:matroid}
	Let $E$ be a finite set, let $\cI$ be a set of subsets of $E$, the set system $M(E,\cI)$ is a matroid if the  following axioms are satisfied:
	\bi
	\item[(I1)] $\emptyset \in \cI$.
	\item[(I2)] If $X \in \cI$ and $Y \subseteq X$ then $Y \in \cI$ ({\it Hereditary property}).
	\item[(I3)] If $X,Y \in \cI$ and $|X| > |Y|$, then there exists $x \in X - Y$ such that $Y \cup \left\{x	\right\} \in \cI$ ({\it Exchange property}).
	\ei
	Every set in $\cI$ is called an {\it independent set}.
\end{definition}
Let  $M(E,\cI)$ be a matroid on the ground set $E$ with the set of  independent sets $\cI$. 
The maximal independent sets are called {\it bases},  the minimal dependent sets are called {\it circuits}.
The {\it rank} function, $r : 2^E \rightarrow \mathbb{Z}_{+}$, defined on all subsets $X\subset E$ is given by:
\[r(X) := \max \left\{ |Y| : Y \subseteq X, Y \in \cI \right\}.\]
The {\it closure operator} $\cl: 2^E \rightarrow 2^E$ is a set function, defined as:
\begin{equation*}\label{eq:closure-operator}
	\cl(X) := \left\{ y \in E : r(X \cup \{ y \}) = r(X) \right\}.
\end{equation*} A set $X \subseteq E$ is said to be {\it closed} if $\cl(X) =X$.
For a subset $X \subseteq E$, let $\cC(M)$ be the family of circuits of $M$. Then, the set
\[ \cC(M \setminus X) := \{ C \subseteq E-X: C\in \cC(M) \},\]
defines the family of circuits for a matroid on $E-X$. The matroid $M \setminus X$ is called the {\it deletion} of $X$ from $M$. The {\it restriction} to $X$ in $M$ is denoted by $M|X$, and is defined as the matroid on $X$ given by $M|X := M \setminus (E - X)$.
\subsubsection*{Base modulus}
Let  $M(E,\cI)$ be a matroid on the ground set $E$ with weights $\si \in \R^E_{>0}$ assigned to each element $e$ in $E$. Let $\cB$ be the family of bases of $M$.
Each base $B \in \cB$ is associated to a {\it usage vector} $\cN(B,\cdot)^T: E\rightarrow \R_{\geq 0}$. In this paper, we define $\cN(B,\cdot)^T$ to be the indicator function of $B$. In other words, $\cB$ is associated with a $|\cB| \times |E|$ {\it usage matrix} $\cN$.
A {\it density}  $\rho\in \R^E_{\geq 0}$ is a vector such that $\rho(e)$ represents the {\it cost} of using the element $e \in E$. We define the {\it total usage cost} of each base $B$ with respect to $\rho$
\begin{equation*}\label{eq:total-usage}
	\ell_{\rho}(B) := \sum\limits_{e \in E} \cN(B,e)\rho(e)= (\cN\rho)(B).
\end{equation*}
A density $\rho \in \R^E_{\geq 0 }$ is  called {\it admissible} for $\cB$, if for all $B\in \cB$,
$ \ell_{\rho}(B) \geq 1 .$
The {\it admissible set} $\Adm(\cB)$ of $\cB$ is defined as the set of all admissible densities for $\cB$,
\begin{equation*}\label{eq:adm-set}
	\Adm(\cB) := \left\{ \rho \in \R^E_{\geq 0 }: \cN\rho \geq \one \right\}. 
\end{equation*} 
Fix $1 \leq p < \infty$, the {\it energy} of the density  $\rho$ is defined as follows
\[ \cE_{p,\si}(\rho):=\sum\limits_{e \in E}\si(e)\rho(e)^p.\] 
The {\it $p$-modulus} of $\cB$ is
\[\Mod_{p,\si}(\cB):= \inf\limits_{\rho \in \Adm(\cB)} \cE_{p,\si}(\rho).\]
When $\si$ is the vector of all ones, we omit $\si$ and write $\cE_{p}(\rho) := \cE_{p,\si}(\rho)$ and $\Mod_{p}(\cB) :=\Mod_{p,\si}(\cB)$.

\subsubsection*{Fulkerson dual families and the $\MEO$ problem}
 
We will routinely identify $\cB$ with the set of its usage vectors $\left\{ \cN(B,\cdot )^T: B \in \cB \right\}$ in $\R^E_{\geq 0}$. We define the {\it dominant} of $\cB$ as
\begin{equation}
\Dom(\cB):= \co(\cB) + \R^E_{\geq 0} 
\end{equation}
where $\co(\cB)$ denotes the convex hull of $\cB$ in $\R^E$. 
Next, we recall Fulkerson duality for modulus.
The {\it Fulkerson blocker family} $\widehat{\cB}$ of $\cB$ is defined as the set of all the extreme points of $\Adm(\cB)$.
	\[ \widehat{\cB} := \Ext\left(\Adm(\cB)\right) \subset  \R^E_{\geq 0}.\]
Then, Fulkerson blocker duality \cite{fulkersonblocking} states that
\begin{equation}\label{eq:dom-adm-block-hat}
	\Dom(\widehat{\cB})= \Adm(\cB),
\end{equation}
\begin{equation}\label{eq:dom-adm-block}
	\Dom(\cB)= \Adm(\widehat{\cB}).
\end{equation}
	Let $\widetilde{\cB}$ be a set of vectors in $\R^E_{\geq 0}$. We say that $\cB$ and $\widetilde{\cB}$ are a {\it Fulkerson dual pair} (or $\cBtil$ is a {\it Fulkerson dual family} of $\cB$) if \[\Adm(\cBtil) = \Dom(\cB).\] 
 Then, $\cBhat$ is the smallest Fulkerson dual family of $\cB$, meaning that $\cBhat \subset \cBtil$ \cite{huyfulkerson}.

When  $1<p < \infty$, let $q:=p/(p-1)$ be the Hölder conjugate exponent of $p$. For any set of weights $\si \in \R^E_{>0}$,  define the dual set of weights $\widetilde{\si}$ as $\widetilde{\si}(e):=\si(e)^{-\frac{q}{p}}$ for all $e\in E$. Let $\cBtil$ be a Fulkerson dual family of $\cB$. Fulkerson duality for modulus  \cite[Theorem 3.7]{pietroblocking} states that
\begin{equation}
	\Mod_{p,\si}(\cB)^{\frac{1}{p}}\Mod_{q,\widetilde{\si}}(\widetilde{\cB})^{\frac{1}{q}}=1.
\end{equation}
Moreover, the optimal $\rho^*$ of $\Mod_{p,\si}(\cB) $ and the optimal $\eta^*$ of $\Mod_{q,\widetilde{\si}}(\widetilde{\cB})$ always exist, are unique, and are related as follows,
\begin{equation}\label{eq:weighted-eta-rho}
	\eta^{\ast}(e) = \frac{\si(e)\rho^{\ast}(e)^{p-1}}{\Mod_{p,\si}(\cB)}, \quad \forall e\in E.
\end{equation}
When $p=2$, we have
\begin{equation}\label{eq:mod2}
	\Mod_{2,\si}(\cB)\Mod_{2,\si^{-1}}(\widetilde{\cB})=1 \qquad\text{and}\qquad   \displaystyle \eta^{\ast}(e) = \frac{\si(e)}{\Mod_{2,\si}(\cB)}\rho^{\ast}(e) \quad \forall  e\in E.
\end{equation}
Let $\cP(\cB)$ be the set of all probability mass functions (pmf) on $\cB$. According to the probabilistic interpretation of modulus \cite{pietrominimal}, we can express

\begin{equation}\label{eq:modmeo}
	\Mod_2(\cB)^{-1} = \min\limits_{\mu \in \cP(\cB)} \mu^T\cN\cN^T\mu.
\end{equation}
Consider the scenario where $\cB$ is a collection of subsets of $E$ with usage vector given by the indicator function. Given a pmf $\mu \in \cP(\cB)$, let $\underline{B}$ and $\underline{B}'$ be two independent random bases in $\cB$, identically distributed with law $\mu$. 
The cardinality of the overlap between $\underline{B}$ and $\underline{B}'$, is $|\underline{B} \cap \underline{B}'|$ and is a random variable whose expectation is denoted by  $\bE_\mu|\underline{B}\cap\underline{B}'|$, which equals $\mu^T\cN\cN^T\mu$. Then, the {\it minimum expected overlap} ($\MEO$) problem for $\cB$ is formulated as 
\begin{equation}\label{eq:meo-unweighted}
	  \min\limits_{\mu \in \cP(\cB)} \bE_\mu|\underline{B}\cap\underline{B}'|.
\end{equation} 
	Moreover, any pmf $\mu \in  \cP(\cB)$ is optimal if and only if
	\begin{equation}\label{eq:eta-rho}
		(\cN\mu)(e) = \rho^*(e)/\Mod_2(\cB) \quad \forall e \in E.
	\end{equation}

\subsection{Matroid base modulus with weights}\label{sec:base-weights}
Given a matroid $ M(E,\cI) $ with weights $\si \in \R^E_{>0}$, let $ \cB $ be the base family of $ M $. A set $ X \subseteq E $ is said to be complement-closed if $E - X$ is closed. Let $ \Phi $ be the family of all nonempty complement-closed sets $ X \subseteq E $ with usage vectors:
\begin{equation}\label{usage2}
	\widetilde{\cN}(X,\cdot)^T := \frac{1}{r(E) - r(E - X)}\ones_{X}.
\end{equation}
Then, $ \Phi $  is a Fulkerson dual family of $\cB$ \cite[Theorem 6.2]{truong2024modulus}.

Base modulus for unweighted matroids was thoroughly studied in \cite{truong2024modulus}.
In this section, we generalize the results of base modulus for unweighted matroids to weighted matroids. We start by considering a matroid $M(E,\cI)$ 	with weights $\si \in \mathbb{Z}_{>0}^{E}.$ 

For each $e \in E(M)$, we create a set $X_e$ containing $\si(e)$ copies of $e$. Let $X_e := \lbr e_1,e_2,\dots, e_{\si(e)} \rbr$ be a set such that $X_e \cap X_{e'} = \emptyset,$ for all $ e, e' \in E(M)$ with $e \neq e'$. Next, we define the $\si$-{\it parallel extension} $M_{\si} $ of $M$ as follows. The $\si$-parallel extension $M_{\si} $ is obtained by replacing each element $e \in E(M)$ by $X_e$. In specific, the ground set $E(M_{\si})$ of $M_{\si}$ is $\bigcup_{e \in E(M)}X_e$. A subset $Y \in E(M_{\si})$ is independent in $M_{\si}$ if and only if $\forall e \in E(M), |X_e \cap Y| \leq 1$ and the set $\lbr e \in E(M): X_e \cap Y \neq \emptyset \rbr$ is independent in $M$. 
In the case where every element $e$ is assigned a constant weight $r$, we write $M_r$ for $M_{\si \equiv r}$ and call $M_r$ the $r$-parallel extension of $M$.

 Let $E' = \lbr e_1: e\in E(M)\rbr \subseteq E(M_{\si}).$ There is a matroid isomorphism between $M$ and $M_{\si}|E'$ with the bijection $e \leftrightarrow e_1$ between $E(M)$ and $E'$. Thus, we can see $M$ as the restriction $M_{\si}|E'$ of $M_{\si}$. Moreover, based on the definition of $M_{\si}$, we have that $S_{\si}(M) = S(M_{\si})$, $D_{\si}(M) = D(M_{\si})$, and $\Mod_{p,\si}(\cB(M)) = \Mod_{p}(\cB(M_{\si}))$. Then, by continuity and $1$-homogeneity of modulus (meaning that $\Mod_{p,k\si}(\cB(M)) = k\Mod_{p,\si}(\cB(M))$ for $k>0$), it is then straightforward to generalize our results to weighted matroids with weights $\si \in \R^E_{>0}$. In the rest of this section, we give results without proofs for weighted base modulus generalized from the unweighted case.

We begin by introducing the {\it minimum expected weighted overlap} problem. Let $\cP(\cB)$ be the set of all probability mass functions (or pmf) on $\cB$. Given a pmf $\mu \in \cP(\cB)$, let $\underline{B}$ and $\underline{B'}$ be two independent random bases, identically distributed by the law $\mu$. We measure the weighted overlap between $\underline{B}$ and $\underline{B'}$,
\begin{equation}
	\si^{-1}(\underline{B} \cap \underline{B'}) := \sum\limits_{e \in \underline{B} \cap \underline{B'}} \si^{-1}(e),
\end{equation}
which is a random variable whose expectation is denoted by  $\bE_{\mu} \left[ \si^{-1}(\underline{B} \cap \underline{B'}) \right]$. Then, the {\it minimum expected weighted overlap} ($\MEO_{\si^{-1}}$) problem is the following problem: 

\begin{equation} \label{meo}
	\begin{array}{ll}
		\text{minimize}    &\bE_{\mu} \left[ \si^{-1}(\underline{B} \cap \underline{B'}) \right] \\
		\text{subject to } & \mu \in \cP(\cB_G). 	 
	\end{array}
\end{equation}
Next, we present a theorem that characterizes the relationship between base modulus and the $\MEO_{\si^{-1}}$ problem.
\begin{theorem} Given a matroid $ M(E,\cI) $ with weights $\si \in \R^E_{>0}$. Let $\cB $ be the base family with usage vectors given by the indicator functions and let $\widetilde{\cB}$ be a Fulkerson dual family of $\cB$. Then, $\rho \in  \R^E_{\geq 0}$, $\eta \in  \R^E_{\geq 0}$ and $\mu \in \bP(\cB)$ are optimal respectively for $\Mod_{2,\si}(\cB)$, $\Mod_{2,\si^{-1}}(\widetilde{\cB})$ and $\MEO_{\si^{-1}}(\cB)$ if and only if the following conditions are satisfied.
	
	\begin{align*}
		{(i)} &\qquad \rho \in \Adm(\cB), \qquad \eta = \cN^T\mu,\\
		{(ii)} &\qquad \eta(e) = \frac{\si(e)\rho(e)}{\Mod_{2,\si}(\cB)} \qquad \forall e \in E,\\
		{(iii)} &\qquad \mu(B)(1-\ell_\rho(B)) = 0 \qquad \forall B \in \cB.
	\end{align*}
	In particular, 
	\begin{align}
		\MEO_{\si^{-1}}(\cB) = \Mod_{2,\si}(\cB)^{-1} = \Mod_{2,,\si^{-1}}(\widetilde{\cB}).
	\end{align}
\end{theorem}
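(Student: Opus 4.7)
The plan is to reduce the claim to the unweighted case from \cite{truong2024modulus} recalled in \eqref{eq:modmeo}--\eqref{eq:eta-rho}, using the $\si$-parallel extension $M_\si$ introduced above, and then pass from integer to real weights by 1-homogeneity and continuity.

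For $\si \in \Z^E_{>0}$, the paper has already noted $\Mod_{p,\si}(\cB(M)) = \Mod_p(\cB(M_\si))$, with an analogous identification of the Fulkerson dual problem and of the optimizers $\rho^\ast, \eta^\ast$ up to the copy-class weighting. The main new content is the identity $\MEO_{\si^{-1}}(\cB(M)) = \MEO(\cB(M_\si))$ together with a matching of optimizing pmfs. I would set this up via a lift $\mu \mapsto \widehat\mu$ from $\cP(\cB(M))$ to $\cP(\cB(M_\si))$: sample $B \sim \mu$, then independently pick one representative $e_i \in X_e$ uniformly at random for each $e \in B$. Writing $p_e := \Pr_\mu[e \in \underline{B}]$, a short computation shows that both the weighted overlap on $M$ and the unweighted overlap on $M_\si$ equal $\sum_{e \in E} p_e^2/\si(e)$. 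Conversely, any pmf on $\cB(M_\si)$ can be symmetrized under the product $\prod_e S_{\si(e)}$ of symmetric groups acting on the copies without changing its expected overlap, and symmetric pmfs are precisely the lifts of elements of $\cP(\cB(M))$. This establishes the identity of values and a bijection of optima.

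With this correspondence in hand, conditions (i)--(iii) transfer from their unweighted counterparts by projecting to first-copy coordinates, once one accounts for the scaling $\widehat\eta^\ast(e_1) = \eta^\ast(e)/\si(e)$ under the lift; the factor $\si(e)$ appearing in (ii) is exactly what this bookkeeping produces, combined with $\Mod_{2,\si}(\cB) = \Mod_2(\cB(M_\si))$. Condition (i) is just admissibility plus the Fulkerson duality relation $\eta = \cN^T\mu$, both preserved under lift and projection; condition (iii) is complementary slackness, preserved trivially. The extension from integer to rational $\si$ uses 1-homogeneity $\Mod_{p,k\si}=k\Mod_{p,\si}$ and $\MEO_{(k\si)^{-1}} = k^{-1}\MEO_{\si^{-1}}$ after clearing denominators, and the extension to real $\si \in \R^E_{>0}$ follows from continuity of all quantities in $\si$, which is elementary because $\cB$ is a finite set and the modulus and MEO are defined by convex optimization over compact sets depending continuously on $\si$.

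The most delicate step is the MEO lift/symmetrization correspondence. One has to verify that the lifted $\widehat\mu$ genuinely lies in $\cP(\cB(M_\si))$ (which follows from the characterization of $M_\si$-bases as selections of one representative per element of an $M$-base) and that the two overlap sums really do collapse to the same expression $\sum_e p_e^2/\si(e)$. Once this bijection of optima is in place, the three characterizing conditions and the final identity $\MEO_{\si^{-1}}(\cB) = \Mod_{2,\si}(\cB)^{-1} = \Mod_{2,\si^{-1}}(\widetilde{\cB})$ follow from routine translation of the unweighted theorem combined with \eqref{eq:mod2}.
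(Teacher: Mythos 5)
Your proposal follows essentially the same route the paper takes: the paper states this theorem without proof, explicitly invoking the $\si$-parallel extension $M_\si$, the identification $\Mod_{p,\si}(\cB(M))=\Mod_p(\cB(M_\si))$, and continuity plus $1$-homogeneity to pass from integer to real weights, which is exactly your reduction, with your lift/symmetrization argument filling in the $\MEO$ bookkeeping the paper leaves implicit. The only quibble is that symmetrizing a pmf over the copy-permuting automorphisms need not leave the expected overlap unchanged (it can only decrease it, by convexity and invariance of the quadratic form), but that weaker statement is all your argument needs.
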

Next, let us introduce the definition of a homogeneous matroid.
\begin{definition}\label{def:homogeneous}
	Given a matroid $ M(E,\cI) $ with weights $\si \in \R^E_{>0}$. Let $\cB $ be the base family with usage vectors given by the indicator functions. Let $\widetilde{\cB}$ be a Fulkerson dual family of $\cB$. Let $\rho^*$ and $\eta^*$ be the unique optimal densities for $\Mod_{2,\si}(\cB)$ and $\Mod_{2,\si^{-1}}(\widetilde{\cB})$ respectively. Then, the matroid $M$ is said to be {\it homogeneous} if $\si^{-1}\eta^*$ is constant, or equivalently, $\rho^*$ is constant.
\end{definition}

Several properties of the weighted base modulus can be proved in the same manner as those for the weighted spanning tree modulus on graphs. See \cite{huyfulkerson} for details on the weighted spanning tree modulus.

\begin{theorem}\label{whom}
	Given a matroid $ M(E,\cI) $ with weights $\si \in \R^E_{>0}$. Let $\cB $ be the base family of $M$. Let $\widetilde{\cB}$ be a Fulkerson dual family of $\cB$.  Define the density $n_{\si}$:
	
	\begin{equation}\label{nhom}
		n_{\si}(e) :=\frac{\si(e)}{\si(E)}r(E) \qquad \forall e \in E.
	\end{equation} 
	Then, $M$ is homogeneous if and only if  $\eta_{\si} \in \Adm(\widetilde{\cB})$.
\end{theorem}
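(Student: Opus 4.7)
The plan is to use Fulkerson duality for modulus together with a direct comparison of the two natural candidate densities $\rho = \mathbf{1}/r(E)$ and $\eta = n_\si$. Throughout, note that every base $B \in \cB$ has cardinality $r(E)$, so the constant density $\rho_0 := \mathbf{1}/r(E)$ satisfies $\cN \rho_0 = \mathbf{1}$ and therefore lies in $\Adm(\cB)$; its energy is
\begin{equation*}
\cE_{2,\si}(\rho_0) = \sum_{e \in E} \si(e) \frac{1}{r(E)^2} = \frac{\si(E)}{r(E)^2}.
\end{equation*}
Similarly, a direct calculation gives $\cE_{2,\si^{-1}}(n_\si) = \sum_{e\in E} \si^{-1}(e)\frac{\si(e)^2 r(E)^2}{\si(E)^2} = r(E)^2/\si(E)$.

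For the forward direction, suppose $M$ is homogeneous. By Definition \ref{def:homogeneous} the optimal $\rho^*$ for $\Mod_{2,\si}(\cB)$ is constant, say $\rho^* \equiv c$. Admissibility forces $c \, r(E) \geq 1$, and minimality of the energy $\si(E) c^2$ then forces $c = 1/r(E)$, i.e., $\rho^* = \rho_0$. Plugging this into equation (\ref{eq:mod2}), the optimizer of $\Mod_{2,\si^{-1}}(\widetilde{\cB})$ is
\begin{equation*}
\eta^*(e) = \frac{\si(e) \rho^*(e)}{\Mod_{2,\si}(\cB)} = \frac{\si(e)/r(E)}{\si(E)/r(E)^2} = \frac{\si(e)\, r(E)}{\si(E)} = n_\si(e),
\end{equation*}
and since $\eta^* \in \Adm(\widetilde{\cB})$ by construction, we conclude $n_\si \in \Adm(\widetilde{\cB})$.

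For the reverse direction, assume $n_\si \in \Adm(\widetilde{\cB})$. Then $\rho_0$ and $n_\si$ are feasible competitors for $\Mod_{2,\si}(\cB)$ and $\Mod_{2,\si^{-1}}(\widetilde{\cB})$, respectively, so
\begin{equation*}
\Mod_{2,\si}(\cB)\cdot \Mod_{2,\si^{-1}}(\widetilde{\cB}) \;\leq\; \cE_{2,\si}(\rho_0)\cdot \cE_{2,\si^{-1}}(n_\si) \;=\; \frac{\si(E)}{r(E)^2}\cdot \frac{r(E)^2}{\si(E)} \;=\; 1.
\end{equation*}
By the Fulkerson duality identity (\ref{eq:mod2}), the product on the left equals $1$, so both inequalities must be equalities. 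Uniqueness of the optimizers then gives $\rho^* = \rho_0$, which is constant, and hence $M$ is homogeneous by Definition \ref{def:homogeneous}.

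The main technical point is to recognize that $\rho_0 = \mathbf{1}/r(E)$ is always feasible (since the bases of a matroid are equicardinal) and that its Fulkerson dual density, computed via (\ref{eq:mod2}), is exactly $n_\si$. After this observation, both directions collapse to a one-line application of duality. No real obstacle arises beyond verifying the typographical identification of $\eta_\si$ in the statement with the density $n_\si$ defined in (\ref{nhom}).
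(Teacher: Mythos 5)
Your proof is correct. Both directions are sound: the constant density $\rho_0=\one/r(E)$ is admissible because all bases are equicardinal, the energy computations $\cE_{2,\si}(\rho_0)=\si(E)/r(E)^2$ and $\cE_{2,\si^{-1}}(n_\si)=r(E)^2/\si(E)$ are right, and the forward direction correctly pins down $\rho^*=\rho_0$ (any larger constant would be beaten by $\rho_0$) before reading off $\eta^*=n_\si$ from (\ref{eq:mod2}); the reverse direction correctly exploits that the product of the two moduli equals $1$ while the two competitors give a product of exactly $1$, so uniqueness of the optimizer forces $\rho^*=\rho_0$. Your identification of $\eta_\si$ in the statement with $n_\si$ from (\ref{nhom}) is also the intended reading. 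The route differs from the paper's in an organizational sense: the paper gives no direct proof, instead deferring to the weighted spanning tree case in \cite{huyfulkerson} and to the reduction of weighted matroids to unweighted ones via the $\si$-parallel extension $M_\si$ together with continuity and $1$-homogeneity of modulus. Your argument is self-contained at the level of weighted modulus, needing only Fulkerson duality for modulus, the product identity (\ref{eq:mod2}), and strict-convexity uniqueness; it avoids the integer-weight approximation and the passage through $M_\si$ entirely, which is arguably cleaner, while the paper's reduction has the advantage of transferring the whole block of unweighted results at once rather than reproving each statement.
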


\begin{theorem}\label{thm:hom-coni}
 Given a matroid $ M(E,\cI) $ with weights $\si \in \R^E_{>0}$. Let $\cB $ be the base family of $M$. Then, $M$ is homogeneous if and only if  $\si \in \coni (\cB)$, where $\coni (\cB)$ is the conical hull of $\cB$.
\end{theorem}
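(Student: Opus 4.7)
The plan is to use Theorem \ref{whom} to identify homogeneity of $M$ with the geometric condition $\si \in h B_f$ for $h := \si(E)/r(E)$ and $f = r$; the equivalence with $\si \in \coni(\cB)$ then follows from the representation $\coni(\cB) = \bigcup_{h'>0} h' B_f$ combined with the observation that the scaling is pinned down by the trace identity $\si(E) = h' f(E)$, so any $\si \in \coni(\cB)$ must sit in this particular $h B_f$.

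For the execution, I would start from Theorem \ref{whom}: $M$ is homogeneous iff $n_\si \in \Adm(\widetilde{\cB})$. Plugging in the usage vectors (\ref{usage2}) and simplifying, the admissibility constraint $n_\si \cdot \widetilde{\cN}(X,\cdot)^T \geq 1$ becomes
\[ \si(X) \;\geq\; h\bigl(r(E) - r(E-X)\bigr) \quad\text{for every nonempty complement-closed } X \text{ with } r(E) > r(E-X). \]
Setting $g(A) := r(E) - r(E-A)$, I would next extend this inequality from complement-closed sets to arbitrary $A \subseteq E$: given $A$ with $g(A) > 0$, take $Y := E \setminus \cl(E\setminus A)$, which is a nonempty complement-closed set with $Y \subseteq A$ and $r(E-Y) = r(\cl(E-A)) = r(E-A)$, whence $g(Y) = g(A)$ and $\si(A) \geq \si(Y) \geq h g(Y) = h g(A)$; the case $g(A) = 0$ is trivial. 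This yields $\si \in h Q_g$, and since $\si(E) = h r(E) = h g(E)$ holds by choice of $h$, in fact $\si \in h C_g$, which coincides with $h B_f$ by the identity cited in the introduction. Hence $\si \in h B_f \subset \coni(\cB)$. For the converse, any $\si \in h' B_f \subset \coni(\cB)$ must satisfy $\si(E) = h' r(E)$, forcing $h' = h$; running the inequalities above in reverse then recovers admissibility of $n_\si$ on $\widetilde{\cB}$, and Theorem \ref{whom} concludes that $M$ is homogeneous.

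The hard part will be the bookkeeping between three descriptions of essentially the same object: the admissibility inequalities coming from Theorem \ref{whom}, which a priori involve only complement-closed sets; the contrapolymatroid inequalities defining $Q_g$ on all subsets of $E$; and the polytope $B_f$ itself. Extending admissibility from complement-closed to arbitrary subsets via the closure operator is straightforward once one notices that $g$ is invariant under $A \mapsto E \setminus \cl(E \setminus A)$, and the identity $B_f = C_g$ then lets one switch freely between the contrapolymatroid and polymatroid pictures in order to land inside $\coni(\cB)$.
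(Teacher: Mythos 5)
Your argument is correct, and it is genuinely different from what the paper does: the paper gives no internal proof at all, deferring to Theorem 7.5 of \cite{huyfulkerson}, whose argument (for weighted spanning trees) runs through the optimal densities of the modulus problems rather than through an explicit dual family. Your route instead assembles the statement entirely from facts already quoted in this paper: Theorem \ref{whom} reduces homogeneity to $n_\si \in \Adm(\widetilde{\cB})$, and since every Fulkerson dual family has the same admissible set $\Dom(\cB)$, you may test admissibility against the complement-closed family $\Phi$ of (\ref{usage2}); the closure trick $Y = E \setminus \cl(E\setminus A)$ (with $Y \neq \emptyset$ exactly when $g(A)>0$, $Y \subseteq A$, and $r(E-Y)=r(E-A)$) correctly upgrades those finitely many inequalities to $\si \in hQ_g$ for $h=\si(E)/r(E)$, and the trace identity $\si(E)=hg(E)$ together with $C_g=B_f$ and $\coni(\cB)=\bigcup_{h'>0}h'B_f$ lands you in $\coni(\cB)$; the converse reverses cleanly because the coordinate sum pins $h'=h$. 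What your approach buys is a self-contained, polyhedral proof verifiable within this paper (and one that dovetails with Theorem \ref{thm:hab}(iii), since your $h$ is exactly $\alpha=\beta$); its cost is that it still rests on Theorem \ref{whom}, which this paper also states without proof, so ultimately both treatments lean on \cite{huyfulkerson}, but yours confines the external dependence to that single characterization. One presentational note: Theorem \ref{whom} writes $\eta_\si$ where it plainly means $n_\si$, and your reading of it is the intended one.
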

\begin{proof}
The proof is the same as Proof of Theorem 7.5 in \cite{huyfulkerson}.
\end{proof}
\begin{theorem} Given a matroid $ M(E,\cI) $ with weights $\si \in \R^E_{>0}$. Let $\cB $ be the base family of $M$. Let $S_{\si}(M)$ be the strength of $M$. Let $\widetilde{\cB}$ be a Fulkerson dual family of $\cB$.
	Let $\eta^*$  be the optimal density for $\Mod_{2,\si^{-1}}(\widetilde{\cB})$. Denote
	\begin{equation}\label{eq:emax}
		E_{ max} :=\left\{ e\in E: \si^{-1}(e)\eta^*(e)=\max\limits_{e \in E}\si^{-1}(e)\eta^*(e) =: (\si^{-1}\eta^*)_{  max} \right\}.
	\end{equation}
	Then, $ E_{ max}$ is optimal for the strength of $M$, and
	\begin{equation}
		(\si^{-1}\eta^*)_{  max} = \frac{1}{S_{\si}(M)}.
	\end{equation}
\end{theorem}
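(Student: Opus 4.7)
The plan is to prove $(\si^{-1}\eta^*)_{max} = 1/S_\si(M)$ by establishing the two inequalities separately and exhibiting $E_{max}$ as a minimizer in (\ref{eq:strength-problem}). Since $\Adm(\widetilde{\cB}) = \Dom(\cB)$ for any Fulkerson dual family, the optimizer $\eta^*$ does not depend on the choice of $\widetilde{\cB}$, so I would take $\widetilde{\cB} = \Phi$ with the usage vectors in (\ref{usage2}).

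For the direction $1/S_\si(M) \leq (\si^{-1}\eta^*)_{max}$, I would first reduce the strength problem to complement-closed sets: for any feasible $X$, the set $X' := E \setminus \cl(E - X)$ lies in $\Phi$, satisfies $X' \subseteq X$ and $r(E - X') = r(E - X)$, so the ratio in (\ref{eq:strength-problem}) only decreases on passing from $X$ to $X'$. Thus $S_\si(M) = \min_{X \in \Phi} \si(X)/(r(E) - r(E - X))$. Admissibility of $\eta^*$ for $\Phi$ then reads $\eta^*(X) \geq r(E) - r(E - X)$ for every $X \in \Phi$, and combining with $\eta^*(e) \leq (\si^{-1}\eta^*)_{max}\,\si(e)$ gives $r(E) - r(E - X) \leq (\si^{-1}\eta^*)_{max}\,\si(X)$, i.e., the desired bound after rearranging and minimizing.

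The main obstacle is the reverse direction, where I need to show that $X = E_{max}$ lies in $\Phi$ and saturates both inequalities above. Writing $c := (\si^{-1}\eta^*)_{max}$, the identity (\ref{eq:weighted-eta-rho}) with $p=2$ gives $\rho^*(e) = \Mod_{2,\si}(\cB)\,\si^{-1}(e)\eta^*(e)$, so $\rho^*$ attains its maximum value $\Mod_{2,\si}(\cB)\cdot c$ exactly on $E_{max}$ and is strictly smaller elsewhere. Let $\mu^*$ be the optimal pmf for $\MEO_{\si^{-1}}(\cB)$, so $\eta^* = \cN^T \mu^*$, and complementary slackness (condition (iii) of the preceding theorem) forces $\ell_{\rho^*}(B) = 1$ for every $B$ with $\mu^*(B) > 0$. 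A swap argument then shows that each such $B$ satisfies $|B \cap E_{max}| = r(E) - r(E - E_{max})$: otherwise there would exist $f \in E - E_{max}$ with $(B \cap (E - E_{max})) \cup \{f\}$ independent, and the fundamental circuit of $f$ in $B \cup \{f\}$ would necessarily meet $E_{max} \cap B$ at some $g$, so the exchanged base $B' := (B \setminus \{g\}) \cup \{f\}$ would have $\ell_{\rho^*}(B') = 1 - \rho^*(g) + \rho^*(f) < 1$, contradicting $\rho^* \in \Adm(\cB)$. Averaging over $B$ yields $\eta^*(E_{max}) = \bE_{\mu^*}|\underline{B} \cap E_{max}| = r(E) - r(E - E_{max})$.

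Finally, to verify $E_{max} \in \Phi$, I would argue by contradiction: if some $f \in E_{max}$ lay in $\cl(E - E_{max})$, pick $B_0$ with $\mu^*(B_0) > 0$ and $f \in B_0$, which is possible since $\eta^*(f) = c\,\si(f) > 0$ ($c > 0$ because $\rho^* \not\equiv 0$). By the previous step, $B_0 \cap (E - E_{max})$ is a base of $M|(E - E_{max})$, and its closure in $M$ therefore contains $f$; but $(B_0 \cap (E - E_{max})) \cup \{f\} \subseteq B_0$ is independent, a contradiction. Hence $E - E_{max}$ is closed and $E_{max} \in \Phi$, so the ratio $\si(E_{max})/(r(E) - r(E - E_{max})) = \si(E_{max})/(c\,\si(E_{max})) = 1/c$ realizes the strength, giving $(\si^{-1}\eta^*)_{max} = 1/S_\si(M)$ and proving that $E_{max}$ is optimal.
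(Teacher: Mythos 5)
Your argument is correct, and it takes a genuinely different route from the paper. The paper does not prove this theorem directly: it is one of the results announced as ``given without proofs,'' obtained by reducing the weighted case to the unweighted theorem of \cite{truong2024modulus} via the $\si$-parallel extension $M_{\si}$ (which preserves strength, arboricity and base modulus for integer weights) and then passing to real weights by continuity and $1$-homogeneity of modulus. You instead give a self-contained proof inside the weighted setting: the lower bound $S_{\si}(M)\geq 1/(\si^{-1}\eta^*)_{max}$ comes from restricting the strength minimization to the complement-closed family $\Phi$ and using $\eta^*\in\Adm(\Phi)=\Dom(\cB)$ (legitimate, since every Fulkerson dual family has the same admissible set, hence the same optimizer $\eta^*$), while the matching upper bound comes from the optimality conditions $\eta^*=\cN^T\mu^*$, $\rho^*=\Mod_{2,\si}(\cB)\,\si^{-1}\eta^*$ and complementary slackness, combined with a base-exchange argument showing every base in the support of $\mu^*$ meets $E_{max}$ in exactly $r(E)-r(E-E_{max})$ elements; the strict inequality $\rho^*(f)<\rho^*(g)$ for $f\notin E_{max}$, $g\in E_{max}$ is what makes the exchange contradict admissibility of $\rho^*$. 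What each approach buys: the paper's reduction is short and recycles the unweighted theorem wholesale, but hides the mechanism behind the parallel-extension bookkeeping and a limiting argument; your proof exposes the mechanism, works for arbitrary real weights at once, and yields the extra (correct, though not needed) fact that $E-E_{max}$ is closed, i.e.\ $E_{max}\in\Phi$. Note that this last paragraph of yours is logically superfluous for the stated theorem: once $E_{max}$ is feasible ($r(E)-r(E-E_{max})=(\si^{-1}\eta^*)_{max}\,\si(E_{max})>0$) and attains the value $1/(\si^{-1}\eta^*)_{max}$, which the lower bound shows is best possible, optimality of $E_{max}$ and the identity $(\si^{-1}\eta^*)_{max}=1/S_{\si}(M)$ already follow.
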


\begin{theorem}\label{emin}
	Given a matroid $ M(E,\cI) $ with weights $\si \in \R^E_{>0}$. Let $\cB $ be the base family of $M$.  Let $D_{\si}(G)$ be the fractional arboricity of $M$. Let $\widetilde{\cB}$ be a Fulkerson dual family of $\cB$.
	Let $\eta^*$  be the optimal density for $\Mod_{2,\si^{-1}}(\widetilde{\cB})$.  Denote 
	\begin{equation}\label{eq:emin}
		E_{ min} :=\left\{ e\in E: \si^{-1}(e)\eta^*(e)=\min\limits_{e \in E}\si^{-1}(e)\eta^*(e) =: (\si^{-1}\eta^*)_{  min} \right\}.
	\end{equation}
	Then, $ E_{ min}$ is optimal for the fractional arboricity of $M$, and
	\begin{equation}
		(\si^{-1}\eta^*)_{  min} = \frac{1}{D_{\si}(M)}.
	\end{equation}
\end{theorem}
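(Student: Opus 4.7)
The plan is to mirror the proof of the preceding theorem about $E_{ max}$ and strength, replacing the role of complement-closed sets there by a matroid-exchange argument relating $E_{ min}$ to its complement. The key input is relation (ii) of the theorem characterizing optimal $\rho^*,\eta^*,\mu^*$: it gives $\si^{-1}(e)\eta^*(e)=\rho^*(e)/\Mod_{2,\si}(\cB)$, so $E_{ min}$ coincides with the set on which the (unique) optimal density $\rho^*$ attains its minimum value, which I denote $\rho^*_{min}$. Relation (i) supplies an optimal pmf $\mu^*$ on $\cB$ with $\eta^*=\cN^T\mu^*$, so that $\eta^*(e)$ equals the probability that $e$ lies in a random base sampled by $\mu^*$.

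The first, easy, inequality is $(\si^{-1}\eta^*)_{ min}\le 1/D_{\si}(M)$. For every $X\subseteq E$ one has $|\underline B\cap X|\le r(X)$, hence
\[
(\si^{-1}\eta^*)_{ min}\,\si(X)\le \eta^*(X)=\bE_{\mu^*}|\underline B\cap X|\le r(X),
\]
from which the bound on $D_{\si}(M)$ follows immediately.

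The core step, and the main obstacle, is to show $\eta^*(E_{ min})=r(E_{ min})$. I will argue by contradiction: if some base $B$ in the support of $\mu^*$ satisfies $|B\cap E_{ min}|<r(E_{ min})$, then there exists $f\in E_{ min}\setminus B$ for which $(B\cap E_{ min})+f$ is independent in $M|E_{ min}$. The unique circuit $C\subseteq B+f$ cannot lie entirely in $E_{ min}$, otherwise $(B\cap E_{ min})+f$ would be dependent; so pick $g\in C\setminus E_{ min}$, giving a base $B':=B-g+f$. Complementary slackness (condition (iii) of the preceding theorem) gives $\ell_{\rho^*}(B)=1$, and $\rho^*(f)=\rho^*_{min}<\rho^*(g)$ since $g\notin E_{ min}$. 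Therefore $\ell_{\rho^*}(B')=1+\rho^*(f)-\rho^*(g)<1$, contradicting $\rho^*\in\Adm(\cB)$. The delicate point is producing an exchange element outside $E_{ min}$; this is exactly where the independence of $(B\cap E_{ min})+f$ in $M|E_{ min}$ is used.

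Finally, summing $\eta^*(e)=(\si^{-1}\eta^*)_{ min}\si(e)$ over $e\in E_{ min}$ gives $\eta^*(E_{ min})=(\si^{-1}\eta^*)_{ min}\si(E_{ min})$, which combined with $\eta^*(E_{ min})=r(E_{ min})$ yields $\si(E_{ min})/r(E_{ min})=1/(\si^{-1}\eta^*)_{ min}$. Hence $D_{\si}(M)\ge 1/(\si^{-1}\eta^*)_{ min}$, matching the easy inequality, so $E_{ min}$ is optimal for the fractional arboricity and $(\si^{-1}\eta^*)_{ min}=1/D_{\si}(M)$.
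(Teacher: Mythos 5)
Your proof is correct. Every step checks out: condition (ii) of the optimality theorem identifies $E_{min}$ with the argmin set of $\rho^*$; the bound $(\si^{-1}\eta^*)_{min}\,\si(X)\le \eta^*(X)=\bE_{\mu^*}|\underline B\cap X|\le r(X)$ gives the easy direction; and the exchange argument is sound, since $f\in E_{min}\setminus B$ with $(B\cap E_{min})+f$ independent forces the unique circuit $C\subseteq B+f$ to leave $E_{min}$, so the exchanged base $B'=B-g+f$ violates admissibility once complementary slackness gives $\ell_{\rho^*}(B)=1$ and $\rho^*(g)>\rho^*(f)$. (Two small points worth making explicit: you need an optimal $\mu^*$ to exist, which follows since the MEO problem minimizes a continuous function over the compact set $\cP(\cB)$; and $r(E_{min})>0$, which holds because the matroid is loopless and $E_{min}\neq\emptyset$.)

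However, your route is genuinely different from the paper's. The paper gives no direct proof of this theorem: it states that all results of this section are transferred from the unweighted case of \cite{truong2024modulus} by passing to the $\si$-parallel extension $M_{\si}$ for integer weights, and then extending to real weights by continuity and $1$-homogeneity of modulus. That reduction reuses the unweighted theory wholesale but requires the parallel-extension construction and a limiting argument for non-integer weights. Your argument is self-contained in the weighted setting: it works directly for arbitrary $\si\in\R^E_{>0}$, uses only the optimality/complementary-slackness characterization together with standard matroid circuit exchange, and yields as a by-product the structural fact that every base in the support of an optimal pmf meets $E_{min}$ in a maximal independent subset of $E_{min}$ (i.e., $|B\cap E_{min}|=r(E_{min})$), which is stronger and more explicit than what the reduction route records.
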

\begin{theorem}\label{thm:sd-matr}
Given a matroid $ M(E,\cI) $ with weights $\si \in \R^E_{>0}$. Let $\cB $ be the base family of $M$.   Let $S_{\si}(M)$ be the strength of $M$. Let $D_{\si}(G)$ be the fractional arboricity of $M$.  Let $\widetilde{\cB}$ be a Fulkerson dual family of $\cB$.
	Let $\eta^*$  be the optimal density for $\Mod_{2,\si^{-1}}(\widetilde{\cB})$. Then,
	\begin{equation}\label{maxmin}
		\frac{1}{(\si^{-1}\eta^*)_{ max}} = S_{\si}(G)\leq \frac{\si (E)}{r(E)}\leq D_{\si}(G) = \frac{1}{(\si^{-1}\eta^*)_{ min}}.
	\end{equation}
	Moreover, the matroid $M$ is homogeneous if and only if $S_{\si}(M) = D_{\si}(M)$.
\end{theorem}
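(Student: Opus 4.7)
The plan is to treat the two outer equalities as immediate consequences of the two theorems stated just before (the $E_{\max}$ result for strength and Theorem \ref{emin} for fractional arboricity), so the real work reduces to showing the sandwich inequality $S_{\si}(M)\le \si(E)/r(E)\le D_{\si}(M)$ and then deriving the homogeneity equivalence from it.

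For the sandwich, the key quantity I want to identify is $\eta^*(E)$. Writing $\eta^*=\cN^T\mu^*$ for the optimal pmf $\mu^*$ on $\cB$ (the characterization given in the theorem on the previous page), I would compute
\begin{equation*}
\eta^*(E)=\sum_{e\in E}\sum_{B\in\cB}\cN(B,e)\,\mu^*(B)=\sum_{B\in\cB}\mu^*(B)\,|B|=r(E),
\end{equation*}
since every base of $M$ has cardinality $r(E)$ and $\mu^*$ is a probability mass function. Then I would apply a weighted-average bound: for each $e\in E$, $(\si^{-1}\eta^*)_{\min}\le \si^{-1}(e)\eta^*(e)\le(\si^{-1}\eta^*)_{\max}$, so multiplying by $\si(e)$ and summing over $e$ yields
\begin{equation*}
(\si^{-1}\eta^*)_{\min}\,\si(E)\;\le\;\sum_{e\in E}\si(e)\si^{-1}(e)\eta^*(e)=\eta^*(E)=r(E)\;\le\;(\si^{-1}\eta^*)_{\max}\,\si(E).
\end{equation*}
Dividing by $\si(E)$ and inverting (using the two outer equalities) gives precisely $S_{\si}(M)\le \si(E)/r(E)\le D_{\si}(M)$.

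For the final claim, I would argue as follows. By Definition \ref{def:homogeneous}, $M$ is homogeneous exactly when $\si^{-1}\eta^*$ is constant on $E$, equivalently $(\si^{-1}\eta^*)_{\min}=(\si^{-1}\eta^*)_{\max}$. Translating both sides through the outer equalities in the chain \eqref{maxmin} shows this is equivalent to $S_{\si}(M)=D_{\si}(M)$. The main subtlety I expect to hit is just the identity $\eta^*(E)=r(E)$: it needs the existence of an optimal $\mu^*$ realizing $\eta^*=\cN^T\mu^*$, which is guaranteed by the earlier theorem characterizing optima of $\MEO_{\si^{-1}}$, so no genuine obstacle arises and the rest is straightforward weighted averaging.
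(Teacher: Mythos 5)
Your proposal is correct, but it follows a genuinely different route from the paper. The paper does not prove Theorem \ref{thm:sd-matr} directly: it is one of the results stated explicitly without proof in Section \ref{sec:base-weights}, justified by reduction to the unweighted case of \cite{truong2024modulus} via the $\si$-parallel extension $M_{\si}$ (for integer weights one has $S_{\si}(M)=S(M_{\si})$, $D_{\si}(M)=D(M_{\si})$, $\Mod_{p,\si}(\cB(M))=\Mod_{p}(\cB(M_{\si}))$), followed by passage to real weights via continuity and $1$-homogeneity of modulus, with a further pointer to the analogous weighted spanning tree arguments in \cite{huyfulkerson}. You instead give a direct proof in the weighted setting: you take the two outer equalities from the preceding theorems (the $E_{max}$ statement for strength and Theorem \ref{emin} for arboricity, which is legitimate since those are exactly the parts restated in the chain \eqref{maxmin}), then use the optimality characterization $\eta^*=\cN^T\mu^*$ to obtain the key identity $\eta^*(E)=\sum_{B\in\cB}\mu^*(B)\,|B|=r(E)$, and the elementary averaging bound $(\si^{-1}\eta^*)_{min}\,\si(E)\le\eta^*(E)\le(\si^{-1}\eta^*)_{max}\,\si(E)$ yields the middle inequalities after inversion; finally, homogeneity in the sense of Definition \ref{def:homogeneous} ($\si^{-1}\eta^*$ constant) is equivalent to $(\si^{-1}\eta^*)_{min}=(\si^{-1}\eta^*)_{max}$, i.e.\ to $S_{\si}(M)=D_{\si}(M)$. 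The trade-off: the paper's reduction imports all unweighted results of \cite{truong2024modulus} wholesale without repeating any argument, but hides the mechanism behind a parallel-extension and limiting step that only directly covers rational approximations; your argument is self-contained, works for arbitrary $\si\in\R^E_{>0}$ at once, and isolates the single identity $\eta^*(E)=r(E)$ that actually drives the sandwich inequality.
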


\section{Polymatroid reinforcement and polymatroid sparsification}\label{sec:polys}
\subsection{Preliminaries}
Polymatroids were introduced by Edmonds \cite{edmonds2003submodular}, who also established many profound insights into the topic. Let $E$ be a finite set, we recall the definition of polymatroids as follows.

\begin{definition}\label{def:polymatroid}
	Let $ P\subseteq \mathbb{R}^E$. Then $P$ is called a polymatroid if $P$ is a compact non-empty subset of  $\mathbb{R}^E_{\geq 0}$ satisfying the following properties:
	\begin{itemize}
		\item[(i)] If $0 \leq x \leq y$ and $y \in P$, then $x \in P$.
		\item[(ii)] For any $x \in \mathbb{R}^E_{\geq 0}$, any maximal vector $y \in P$ with $y \leq x$ (such $y$ is called a $P$\textit{-basic} of $x$) has the same component sum $y(E)$.
	\end{itemize}
\end{definition}

Let $f$ be a real-valued function defined on subsets of $E$. The function $f$ is said to be {\it submodular} if for all subsets $A,B$ of $E$, we have
\begin{equation}
	f(A \cap B) + f(A \cup B) \leq f(A) + f(B).
\end{equation}
Conversely, $f$ is said to be {\it supermodular} if $-f$ is submodular. Next, we recall the definition of {\it polymatroid functions}.

\begin{definition}\label{def:poly-func}
	A polymatroid function is a real-valued function $f$ defined on subsets of $E$, which is normalized, nondecreasing, and submodular, meaning that
	
	\bi
	\item[(i)] $f(\emptyset)=0$;
	\item[(ii)] $f(A) \leq f(B)$ if $A \subseteq B \subseteq E$;
	\item[(iii)] $f(A \cap B)+f(A \cup B) \leq 	f(A) +f(B)$ for all subsets $A,B \subseteq E$.
	\ei
\end{definition}
Let $P_f$ be the polyhedron associated with a polymatroid function $f$ defined as in (\ref{eq:polymatroid}).
It is well-known that $P_f$ is a polymatroid. It is important to note that every polymatroid $P$ can be represented in the form of $P_f$ for some polymatroid function $f$. We let $B_f$ be the base polytope of the polymatroid function $f$ defined as in (\ref{eq:basepolytope}).

Let $P$	 be a polymatroid,  one interesting problem in this field is finding a $P$-basis $y$ of a given vector $x \in \mathbb{R}^E_{\geq 0}$. An algorithm for solving this problem is a greedy algorithm which starts with $y = 0$ and successively increases each component of $y$ as much as possible, subject to the constraints $y \leq x$ and $y \in P$, see \cite{edmondsgeedy}.
Given a vector $m \in \mathbb{R}^E$, the greedy algorithm can be generalized to solve the following optimization problem:
\begin{equation}\label{pbasic}
	\min\left\{m \cdot y : y \text{ is a P-basis of }x \right\},
\end{equation}
as discussed in \cite{cunninghamoptimal}, \cite{edmondsgeedy}.
For (\ref{pbasic}), we increase successively each component in the order $j_1,j_2,\dots,j_k$ where $k =|E|$ and $m_{j_1} \leq m_{j_2} \leq \dots \leq m_{j_k}$. 
The well-known greedy algorithm for finding a minimum spanning tree in a graph (Kruskal's algorithm)  is a special case of this greedy algorithm. It arises by choosing $x = \one\in \R^E$ where $\one$ is the vector containing all ones. Cunningham \cite{cunninghamoptimal} proved the generalized algorithm does solve (\ref{pbasic}). To implement this algorithm, we need an oracle that computes

\begin{equation}\label{oracle1}
	\max \left\{ \epsilon : y+\epsilon\ones_{\left\{ j\right\}} \in P \right\} \qquad \text{ for any } y \in P \text{ and }j\in E.
\end{equation}

\subsection{Polymatroid reinforcement }\label{sec:rein-poly}

Let $f$ be a polymatroid function on $E$. Let  $P = P_f$ be the associated polymatroid with $f$. Let $B_f$ be the base polytope defined as in (\ref{eq:basepolytope}). For $h>0$, we define $hB_f:= \lbr hx :x \in B_f \rbr$, and	$hP_f := \left\{ hx: x \in P_f \right\}$. Then, we have that $hP_f$ is also a polymatroid with the polymatroid function $hf$, and $hB_f$ is its base polytope. Given a per-unit increasing cost $m(e) \geq 0 $ for each $e \in E$ and  $s \in \R^E_{>0}$. We recall the polymatroid reinforcement problem in (\ref{eq:rein-poly}) where $\si=s$. To study this problem, we start by characterizing maximal vectors in the associated polymatroid $P = P_f$.

\begin{lemma}\label{maximal}
	Let  $P = P_f$ be the associated polymatroid of a polymatroid function $f$. Let $B_f$ be the base polytope of $f$. Given $x \in P$. Then, $x$ is a  maximal vector in $P$ if and only if $x \in B_f$.
\end{lemma}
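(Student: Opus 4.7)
The plan is to split the equivalence into the easy inclusion $B_f \subseteq \{\text{maximal vectors in } P\}$ and the nontrivial reverse inclusion. For the easy direction, suppose $x \in B_f$ and $y \in P$ satisfies $y \geq x$ coordinatewise. Then on one hand $y(E) \geq x(E) = f(E)$, while on the other hand $y \in P_f$ gives $y(E) \leq f(E)$. Hence $y(E) = x(E)$, and combined with $y \geq x$ this forces $y = x$, so $x$ is maximal.

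For the nontrivial direction, assume $x \in P$ is maximal. The key step is to show $x(E) = f(E)$; then membership in $B_f$ is immediate. I would argue via \emph{tight sets}. For each $e \in E$, maximality implies $x + \varepsilon \ones_{\{e\}} \notin P_f$ for every $\varepsilon > 0$. Since $x + \varepsilon \ones_{\{e\}}$ remains nonnegative, the only way it can leave $P_f$ is that some inequality $z(A) \leq f(A)$ is already tight at $x$ for a set $A$ containing $e$. Thus for each $e$ there exists $A_e \subseteq E$ with $e \in A_e$ and $x(A_e) = f(A_e)$.

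The main step is then the standard observation that, under submodularity of $f$, the family of tight sets is closed under union. Indeed, for tight $A$ and $B$,
\begin{equation*}
x(A) + x(B) = x(A \cup B) + x(A \cap B) \leq f(A \cup B) + f(A \cap B) \leq f(A) + f(B) = x(A) + x(B),
\end{equation*}
so equality holds throughout, and in particular $x(A \cup B) = f(A \cup B)$. Applying this iteratively to the sets $\{A_e\}_{e \in E}$ gives that $\bigcup_{e \in E} A_e = E$ is tight, i.e. $x(E) = f(E)$, so $x \in B_f$.

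The main obstacle is identifying the right handle on maximality: one could instead invoke property (ii) of Definition \ref{def:polymatroid} by observing that $x$ is a $P$-basic of $M \one$ for $M$ large, and comparing its component sum to that of a base obtained from Edmonds' greedy algorithm, but this requires constructing an element of $B_f$ up front. The tight-set approach above is cleaner because it uses only submodularity of $f$ and the explicit description of $P_f$, avoiding any appeal to nonemptiness of $B_f$ or to Definition \ref{def:polymatroid}(ii).
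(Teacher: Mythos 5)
Your proof is correct, but it takes a genuinely different route from the paper. The paper handles the nontrivial direction by invoking Definition \ref{def:polymatroid}(ii): it bounds $x$ by the all-$c$ vector $\mathbf{c}$ with $c \geq f(E)$, observes that a maximal $x$ is then a $P$-basis of $\mathbf{c}$, picks an arbitrary $y \in B_f$ (which is also a $P$-basis of $\mathbf{c}$), and concludes $x(E) = y(E) = f(E)$. That argument is short but leans on two external facts: that $P_f$ satisfies the polymatroid axiom (ii) and that $B_f$ is nonempty. Your tight-set argument instead works directly from the inequality description of $P_f$ and submodularity of $f$: maximality forces, for each $e$, a tight constraint $x(A_e) = f(A_e)$ with $e \in A_e$, and the standard uncrossing computation shows tight sets are closed under union, so $E$ itself is tight. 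This is self-contained and arguably more robust (it needs neither an element of $B_f$ nor Definition \ref{def:polymatroid}(ii)), at the cost of being a bit longer. One small point worth tightening: the claim that leaving $P_f$ under $x + \varepsilon\ones_{\{e\}}$ for every $\varepsilon > 0$ forces a constraint containing $e$ to be tight \emph{at} $x$ deserves a word of justification — since there are only finitely many constraints, if every set $A \ni e$ had positive slack $f(A) - x(A)$, you could increase the $e$-th coordinate by the minimum slack and stay in $P_f$, contradicting maximality; as written, the finiteness is used silently.
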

This fact should be standard, but we provide a proof for the reader's convenience.
\begin{proof}
	Assume that $x \in B_f$. Then, we have $x(E) = f(E)$. Hence, $x$ is maximal because if we increase any component of $x$, then  $x(E) > f(E)$, contradiction with the definition of $P$ in (\ref{eq:polymatroid}).
	
	Assume that  $x \in P$ is a  maximal vector in $P$. Let $c$ be a number such that $c \geq f(E)$. Let $ \cs \in \R^E$ be the column vector of all $c$. 
	We have 
	\begin{align*}
		x(e) \leq f(\left\{ e\right\}) \leq c, \quad \forall e \in E  &&\left(\text{by the definition (\ref{eq:polymatroid}})\right).
	\end{align*}
	Hence, we obtain $x \leq \cs $. Combine with the fact that  $x$ is  maximal in $P$, we achieve that $x$ is a $P$-basis of $\cs$. Let $y \in B_f$, then $y$ is maximal and is a  $P$-basis of $\cs$. By  Definition \ref{def:polymatroid} of polymatroid, $x$ and $y$ has the same component sum, in other words, $x(E)=y(E)= f(E).$ 	Thus, we obtain $x \in B_f$.
\end{proof}

The following lemma defines a polymatroid generated by any element $s\in P_f$.

\begin{lemma}\label{pspoly}
	Let  $P = P_f$ be the associated polymatroid of a polymatroid function $f$. Let $B_f$ be the base polytope of $f$. Given $s \in P$, we define the following polyhedron
	\begin{equation}\label{eq:restrict-poly}
		P^s := \left\{ x \in \R^E_{\geq 0 } : s + x \in P \right\}. \end{equation}
	Then, $P^s$ is a polymatroid.
	
\end{lemma}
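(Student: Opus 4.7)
The plan is to verify the two defining properties of a polymatroid from Definition \ref{def:polymatroid} directly, leveraging the fact that $P$ is itself a polymatroid and that the affine translation $y \mapsto s+y$ sends $P^s$ into $P$. First, I would dispose of the basic topological properties: $P^s$ is nonempty because $0 \in P^s$ (since $s = s + 0 \in P$); $P^s \subseteq \R^E_{\geq 0}$ by definition; and $P^s$ is compact, since it is closed (it equals the intersection of $\R^E_{\geq 0}$ with the preimage of the compact set $P$ under the continuous map $y \mapsto s+y$) and bounded (any bound on $P$ yields a bound on $P^s$).

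Next, I would verify the downward-closure property (i) of Definition \ref{def:polymatroid}. Suppose $0 \leq x \leq y$ with $y \in P^s$, so $s + y \in P$. Since $0 \leq s \leq s + x \leq s + y$, the downward closure of $P$ gives $s + x \in P$, and hence $x \in P^s$.

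The heart of the argument is property (ii), the equal-component-sum condition. Given any $w \in \R^E_{\geq 0}$, I would relate $P^s$-basics of $w$ to $P$-basics of $s + w$ via the translation. If $y^*$ is a maximal element of $P^s$ subject to $y^* \leq w$, set $z^* := s + y^*$. Then $z^* \in P$ and $z^* \leq s + w$. I claim $z^*$ is a $P$-basic of $s + w$: for any $z \in P$ with $z^* \leq z \leq s + w$, we have $z \geq z^* \geq s$, so $z - s \in \R^E_{\geq 0}$, and $s + (z - s) = z \in P$ gives $z - s \in P^s$, with $y^* \leq z - s \leq w$. Maximality of $y^*$ in $P^s$ forces $z - s = y^*$, i.e., $z = z^*$. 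Now, applying property (ii) of the polymatroid $P$ to the vector $s + w$, all $P$-basics of $s + w$ share a common component sum $K$; in particular $z^*(E) = K$, so every maximal $y^* \in P^s$ with $y^* \leq w$ satisfies $y^*(E) = K - s(E)$, a value independent of the choice of $y^*$.

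The main, though modest, obstacle is establishing the maximality claim for $z^*$: the translation trick works only because any competitor $z \geq z^*$ automatically satisfies $z \geq s$, so that $z - s$ is a legitimate element of $P^s$ that can be compared against $y^*$. I would emphasize this observation, since it lets the proof avoid the harder task of classifying \emph{all} $P$-basics of $s + w$; we only need that translates of $P^s$-basics of $w$ land among them.
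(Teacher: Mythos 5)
Your proof is correct and follows essentially the same route as the paper: downward closure of $P^s$ is inherited from $P$ via the translation $x \mapsto s+x$, and property (ii) is obtained by showing that a maximal element of $P^s$ below $w$ translates to a $P$-basis of $s+w$ and then invoking property (ii) of $P$. The only differences are cosmetic: you argue maximality of $s+y^*$ directly against any dominating $z \in P$ with $z \le s+w$ (noting $z \ge s$), whereas the paper argues by contradiction with a single-coordinate increase, and you spell out compactness in slightly more detail.
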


\begin{proof}
	We aim to show that $P^s$ satisfies Definition \ref{def:polymatroid}.
	Since $s \in P$, then $P^s$ contains the vector $0$. Moreover, $P^s$ is compact by its definition.
	For any vectors $x,y$ such that $ x \leq y$ and $y \in P^s$, we have $s+y \in P$ and  $s+x \leq s+y$. Then,  $s+x \in P$ by Definition \ref{def:polymatroid} (i), which implies $x \in P^s$. Hence, $P^s$ satisfies Definition \ref{def:polymatroid} (i).

	For any $y \in \mathbb{R}^E_{\geq 0}$, let $x$ be a maximal vector among all vectors satisfying $x \leq y$ and $x \in P^s$. By the definition of $P^s$, $s+x \in P$.  We want to show that: $s+x$ is a $P$-basis of $s+y$. Indeed, suppose that $s+x$ is not a $P$-basis of $s+y$. Then, there exists $\epsilon > 0$ and $e \in E$ satisfying
	\begin{equation*}
		s+x+\epsilon \ones_{\{ e \}} < s+y \quad \text{and} \quad s+x+\epsilon \ones_{\{ e \}}
		\in P.
	\end{equation*}
	This is equivalent to
	\begin{equation*}
		x+\epsilon \ones_{\{ e \}} < y \quad \text{and} \quad x+\epsilon \ones_{\{ e \}}
		\in P^s,
	\end{equation*}
	contradicting the definition of $x$. Thus, $s+x$ is a $P$-basis of $s+y$. By Definition \ref{def:polymatroid} (ii), we have $(s+x)(E)$ does not depend on $x$, it only depends on $y$ and $s$.
	This implies that $x(E) = (s+x)(E) - s(E)$ does not depend on $x$. Therefore, $P^s$ satisfies Definition \ref{def:polymatroid} (ii). In conclusion, $P^s$ is a polymatroid.
\end{proof}

The following lemma characterizes maximal vectors in the polymatroid $P^s$.

\begin{lemma}\label{maximalps}
	Let  $P = P_f$ be the associated polymatroid of a polymatroid function $f$. Let $B_f$ be the base polytope of $f$. Given $s \in P$, let $P^s$ be defined as in (\ref{eq:restrict-poly}). Given $x \in P^s$, then, $x$ is maximal in $P^s$ if and only if $(s+x)(E)= f(E)$. 
\end{lemma}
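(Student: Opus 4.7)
The plan is to reduce both directions to Lemma \ref{maximal} applied to the ambient polymatroid $P=P_f$, using the bijection $x \leftrightarrow s+x$ between $P^s$ and the slice $\{y \in P : y \geq s\}$ that comes directly from the definition $P^s = \{x \in \R^E_{\geq 0} : s+x \in P\}$.

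For the ``if'' direction, I would assume $(s+x)(E) = f(E)$. Since $x \in P^s$ means $s+x \in P$, this puts $s+x$ in $B_f$, and Lemma \ref{maximal} then tells me that $s+x$ is maximal in $P$. To transfer this back to $P^s$, I would observe that any $y \in P^s$ with $y \geq x$ and $y \neq x$ yields $s+y \in P$ with $s+y \geq s+x$ and $s+y \neq s+x$, contradicting the maximality of $s+x$ in $P$. Hence $x$ is maximal in $P^s$.

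For the converse I would argue by contradiction. Suppose $x$ is maximal in $P^s$ but $(s+x)(E) < f(E)$. Then $s+x \notin B_f$, so by Lemma \ref{maximal} the vector $s+x$ is not maximal in $P$; that is, there exists $y \in P$ with $y \geq s+x$ componentwise and $y \neq s+x$. Pick a coordinate $e \in E$ with $y(e) > (s+x)(e)$, set $\epsilon := y(e) - (s+x)(e) > 0$, and define $z := s+x+\epsilon \ones_{\{e\}}$. Since $s+x \leq z \leq y$ componentwise and $y \in P$, the hereditary property (Definition \ref{def:polymatroid}(i)) forces $z \in P$. Equivalently, $x + \epsilon \ones_{\{e\}} \in P^s$, which strictly dominates $x$, contradicting the maximality of $x$ in $P^s$.

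The only step requiring care is in the converse: one is handed an arbitrary strict dominator $y$ of $s+x$ in $P$, whereas to violate the maximality of $x$ in $P^s$ one needs an increment along a single coordinate. The hereditary axiom of the polymatroid resolves this cleanly by passing from $y$ to the one-coordinate bump $z$, so no further ingredients beyond Lemma \ref{maximal} and Definition \ref{def:polymatroid}(i) appear to be needed.
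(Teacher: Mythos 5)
Your proof is correct, but the forward (harder) direction takes a genuinely different route from the paper. The paper's argument first extends $s$ greedily to a maximal vector $s+x'$ of $P$, applies Lemma \ref{maximal} to get $(s+x')(E)=f(E)$, and then invokes Definition \ref{def:polymatroid}(ii) for the polymatroid $P^s$ (this is where Lemma \ref{pspoly} is needed) to conclude that the two maximal vectors $x$ and $x'$ of $P^s$ have the same component sum. You instead argue by contradiction: if $(s+x)(E)\neq f(E)$ then $s+x\notin B_f$, so by Lemma \ref{maximal} it is not maximal in $P$, and you convert an arbitrary strict dominator $y\geq s+x$ into a single-coordinate increment $z=s+x+\epsilon\ones_{\{e\}}$ lying in $P$ via the down-closedness axiom (Definition \ref{def:polymatroid}(i)), contradicting maximality of $x$ in $P^s$. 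Your version is more elementary and self-contained: it bypasses both the greedy-augmentation existence argument and the fact that $P^s$ is a polymatroid, using only Lemma \ref{maximal} and axiom (i); the one delicate step, reducing a general dominator to a one-coordinate bump, is handled correctly. The paper's version, on the other hand, leans on Lemma \ref{pspoly} and Definition \ref{def:polymatroid}(ii), which fits its broader program of treating $P^s$ as a polymatroid in its own right (this structure is reused later for the greedy algorithm), so each approach has its own merit.
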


\begin{proof} Assume that $(s+x)(E)= f(E)$. By Lemma \ref{maximal}, $s+x$ is maximal in $P$, this implies that $x$ is maximal in $P^s$ by the definition of $P^s$.
	
	Conversely, assume that $x$ is maximal in $P^s$. To prove that $(s+x)(E)= f(E)$, we first claim that there exists $x' \in P^s$ such that   $(s+x')(E)= f(E)$. 
	
	If $s$ is maximal in $P$, by Lemma \ref{maximal}, we obtain $s(E)=f(E)$. Then, we choose $x'=0 \in \R^E$. If $s$ is not maximal in $P$, then we can increase successively each component of $s$ as much as possible subject to $s \in P$, the resulting vector has the form $s+x'$ for some $ x' \geq 0$. The vector $s+x'$ is maximal in $P$ because if there exists some $e \in E$ and $\ep >0$ such that $(s+x'+\epsilon \ones_{\{ e \}}) \in P$, then during the process, we must have been increased the $e$-component of $s$ to at least $x'(e)+\ep$, a contradiction.
	Hence, by Lemma \ref{maximal}, $(s+x')(E)= f(E)$ . Therefore, there exists $x' \in P^s$ such that   $(s+x')(E)= f(E)$. 
	
	By the above argument, $x'$ is also maximal in $P^s$. By the Definition \ref{def:polymatroid} (ii), $x$ and $x'$  has the same component sum, this implies that $(s+x)(E)= (s+x')(E)= f(E)$.
\end{proof}

The feasible set of the reinforcement problem is $	\displaystyle  \bigcup_{ h>0} A_h$ where 

\begin{equation}\label{eq:def-eh}
	A_h :=  \left\{  z\in \R^E_{\geq 0}:  s+ z \in hB_f \right\}.
\end{equation}

\begin{lemma}\label{CD}
	Let  $P = P_f$ be the associated polymatroid of a polymatroid function $f$. Let $B_f$ be the base polytope of $f$. Let $A_h$ be defined as in (\ref{eq:def-eh}). Let
		$\alpha := \min \left\{ h>0: s \in hP_f  \right\}$ be defined as in (\ref{eq:alpha}).
	Then, $A_h \neq \emptyset$ if and only if $h \geq \alpha $.
\end{lemma}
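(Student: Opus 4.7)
The plan is to prove both implications using the preceding structural lemmas about polymatroids, applied not to $P_f$ itself but to the scaled polymatroid $hP_f$ (which is the polymatroid associated with the polymatroid function $hf$, and whose base polytope is $hB_f$).

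For the forward direction, suppose $A_h\neq\emptyset$ and pick $z\in A_h$. Then $s+z\in hB_f\subseteq hP_f$, and since $z\geq 0$ we have $0\leq s\leq s+z$. Because $hP_f$ is a polymatroid, Definition \ref{def:polymatroid}(i) gives $s\in hP_f$. Hence $h$ belongs to the set $\{h>0:s\in hP_f\}$, so $h\geq\alpha$ by the definition (\ref{eq:alpha}) of $\alpha$. A small preliminary remark I would make here is that $\alpha$ is well-defined: the set $\{h>0:s\in hP_f\}$ is upward closed (if $s\in hP_f$ and $h'>h$, then $s=(h/h')(h's/h)$... more directly, $s/h'=(h/h')(s/h)\leq s/h\in P_f$, so by downward closedness of $P_f$ we get $s/h'\in P_f$), and the infimum is attained by compactness of $P_f$.

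For the reverse direction, suppose $h\geq\alpha$, so $s\in hP_f$. Apply Lemma \ref{pspoly} to the polymatroid $hP_f$ (associated with the polymatroid function $hf$): the set
\[
(hP_f)^s=\{x\in\R^E_{\geq 0}:s+x\in hP_f\}
\]
is itself a polymatroid. In particular it is a nonempty compact subset of $\R^E_{\geq 0}$, so it contains a maximal vector $z$. By Lemma \ref{maximalps} applied with the polymatroid function $hf$, such a maximal vector satisfies $(s+z)(E)=hf(E)$. Combined with $s+z\in hP_f$, this means $s+z\in hB_f$, so $z\in A_h$ and $A_h\neq\emptyset$.

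The only mildly subtle point, and what I would treat as the main thing to be careful about, is the consistent use of Lemmas \ref{maximal}, \ref{pspoly}, and \ref{maximalps} for the scaled polymatroid function $hf$ rather than $f$: one must verify (essentially by writing $hP_f=P_{hf}$ and $hB_f=B_{hf}$) that these results apply verbatim. Once that is in place, the proof is a clean application of downward closedness in one direction and of the existence of a maximal vector in the polymatroid $(hP_f)^s$ in the other.
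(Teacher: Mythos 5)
Your proof is correct. The forward direction is exactly the paper's argument: $z\in A_h$ gives $s\le s+z\in hB_f\subset hP_f$, so $s\in hP_f$ and $h\ge\alpha$. For the reverse direction, however, you take a genuinely different (and in fact more careful) route than the paper's written proof, which simply asserts that $h\ge\alpha$ implies $0\in A_h$. As stated, $0\in A_h$ means $s\in hB_f$, which is false in general: $s\in hP_f$ only gives $s(E)\le hf(E)$, whereas membership in $hB_f$ requires equality; presumably the intended claim was only that $0\in(hP_f)^s$. Your argument supplies precisely the missing step: apply Lemma \ref{pspoly} to the scaled polymatroid function $hf$ (noting $hP_f=P_{hf}$, $hB_f=B_{hf}$) to see that $(hP_f)^s$ is a nonempty compact polymatroid, take a maximal vector $z$ in it, and use Lemma \ref{maximalps} to conclude $(s+z)(E)=hf(E)$, hence $s+z\in hB_f$ and $z\in A_h$. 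This is the same mechanism the paper itself relies on afterwards in Remark \ref{ehandps}, so your proof is consistent with the surrounding development while repairing the one-line claim. Your auxiliary observations, that $\alpha$ is well defined and attained (upward closedness of $\{h>0:s\in hP_f\}$ plus compactness of $P_f$, together with $s>0$) and that Lemmas \ref{maximal}--\ref{maximalps} apply verbatim to $hf$, are also correct and worth stating explicitly.
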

\begin{proof}
	Let $h >0$. If there exists $z\in A_h$. Then, we have $ s+ z \in hB_f \subset hP_f.$ Because $s \leq s+z$, we have $s \in hP_f$, hence $h \geq \alpha $ by the definition of $\alpha $. If $h \geq \alpha $, then $0 \in A_h$.
\end{proof}

\begin{remark}\label{ehandps}
	Let $\alpha $ be defined as in (\ref{eq:alpha}). For $h \geq \alpha $, we have 
	\begin{equation*}
		A_h=  \left\{  z\in \R^E_{\geq 0}:  s+ z \in hP_f, (s+z)(E) = hf(E) \right\}.
	\end{equation*}
	By Lemma \ref{maximalps}, we obtain 
	\begin{equation}
		A_h= \left\{ z \in (hP_f)^{s} : z \text{ is  maximal in } (hP_f)^{s} \right\}.
	\end{equation}
\end{remark}
\begin{remark}
	Let $h_1 \geq h_2 \geq \alpha $, and let $z_1 \in A_{h_1}$, $z_2 \in A_{h_2}$. Then \[z_1(E) = h_1f(E) - s(E) \geq h_2f(E) - s(E) = z_2(E).\] Therefore, if the cost $m(e)  =1$ for all $e \in E$, then \[\min\limits_{z \in A} \one \cdot z = \min\limits_{z \in A_{\alpha }}  \one \cdot z = \alpha f(E) - s(E).\]
\end{remark}
Now, we consider any costs $m \in \R^{E}_{\geq 0}$. The following theorem is one of our main results.

\begin{theorem}\label{thm:rein-main} 
	Let  $P = P_f$ be the associated polymatroid of a polymatroid function $f$. Let $B_f$ be the base polytope of $f$.
	Given $s \in \R^E_{>0}$ and $m \in \R^E_{\geq 0}$. Let $A_h$, and  $\alpha $ be defined as in (\ref{eq:def-eh}), and (\ref{eq:alpha}) respectively. For every $h\geq \alpha $, let $z^*_h$ be an optimal solution of the problem $\min\limits_{z\in A_h} m \cdot z$. Then,
	\begin{equation}\label{eq:rein-main}
		m \cdot z^*_{h} \geq m \cdot z^*_{\alpha }  \qquad \forall h \geq \alpha .
	\end{equation}   
	In other words, the polymatroid reinforcement problem is equivalent to $\min\limits_{z \in A_{\alpha }} m \cdot z.$

\end{theorem}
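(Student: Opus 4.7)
The plan is to prove the following stronger claim: for every $h \geq \alpha$ and every $z \in A_h$, there exists $z' \in A_{\alpha}$ with $z' \leq z$ componentwise. Since $m \geq 0$, this immediately yields $m \cdot z' \leq m \cdot z$, and applying the claim to $z = z^*_h$ gives $m \cdot z^*_{\alpha} \leq m \cdot z' \leq m \cdot z^*_h$, which is exactly (\ref{eq:rein-main}).

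To produce such a $z'$, I would set $u := s + z$ and search for a vector $v \in \alpha B_f$ satisfying $s \leq v \leq u$. Given such a $v$, the choice $z' := v - s$ automatically satisfies $0 \leq z' \leq z$ and $s + z' = v \in \alpha B_f$, so $z' \in A_{\alpha}$. I would produce $v$ as a componentwise maximal element of the compact set $K := \alpha P_f \cap \lbr x \in \R^E : s \leq x \leq u \rbr$. The set $K$ is nonempty because $s \in \alpha P_f$ (by the definition of $\alpha$ in (\ref{eq:alpha})), and a componentwise maximal element of $K$ exists by compactness (for instance, as a maximizer of $x \mapsto x(E)$ over $K$).

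The key step is then to verify that any such maximal $v$ actually lies in $\alpha B_f$, i.e., $v(E) = \alpha f(E)$. For this I would use the standard fact (from submodularity of $\alpha f$ together with the constraints $v(A) \leq \alpha f(A)$) that the family of tight sets $\cT(v) := \lbr A \subseteq E : v(A) = \alpha f(A) \rbr$ is closed under union, and hence admits a unique maximal element $T_0$. For every $e \notin T_0$ the vector $v$ can be strictly increased in coordinate $e$ while remaining in $\alpha P_f$; by maximality of $v$ in $K$ this forces $v(e) = u(e)$. Hence $v(E\setminus T_0) = u(E\setminus T_0)$, and using $u(E) = h f(E)$ and $u(T_0) \leq h f(T_0)$ (both from $u \in h B_f \subseteq h P_f$) gives
\begin{align*}
v(E) &= v(T_0) + v(E\setminus T_0) = \alpha f(T_0) + u(E) - u(T_0) \\
&\geq \alpha f(T_0) + h f(E) - h f(T_0) = h f(E) - (h - \alpha) f(T_0) \geq \alpha f(E),
\end{align*}
where the last inequality uses monotonicity of $f$ (so $f(T_0) \leq f(E)$) and $h \geq \alpha$. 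Combined with $v(E) \leq \alpha f(E)$ from $v \in \alpha P_f$, this gives $v(E) = \alpha f(E)$, so $v \in \alpha B_f$, and the construction of $z'$ is complete.

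The principal obstacle I anticipate is precisely this final verification: one must rule out the scenario in which $v$ stalls with $v(E) < \alpha f(E)$ because the ceiling $u$ happens to block every coordinate where $v$ still has room inside $\alpha P_f$. The tight-sets argument closes this gap by showing that the blocked coordinates lie exactly in $E \setminus T_0$ for some tight $T_0$, and that the slack $u(E) - u(T_0) \geq h(f(E) - f(T_0))$ forced by $u \in h B_f$ with $h \geq \alpha$ always covers the shortfall $\alpha(f(E) - f(T_0))$ that must be filled on $E \setminus T_0$.
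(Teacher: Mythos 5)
Your proof is correct, and while it shares the paper's overall skeleton, the key step is handled by a genuinely different argument. Both you and the paper reduce (\ref{eq:rein-main}) to producing, below a given $z\in A_h$, a point of $A_{\alpha}$: in both cases this point is a componentwise maximal extension of $s$ inside $\alpha P_f$ under the ceiling $s+z$ (your $v$ is exactly $s+y$ for the paper's $(\alpha P_f)^{s}$-basis $y$ of $z^*_h$). The difference lies in verifying that this maximal extension actually lands in $\alpha B_f$. The paper does this abstractly: it uses Lemma \ref{pspoly} (that $(\alpha P_f)^{s}$ is a polymatroid), Lemma \ref{maximal}, and the equal-component-sum axiom of Definition \ref{def:polymatroid}(ii), with the scaled vector $x=\tfrac{\alpha}{h}(s+z^*_h)\in\alpha B_f$, $x\le s+z^*_h$, serving as a witness basis that pins down $(s+y)(E)=\alpha f(E)$. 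You instead argue directly from the inequality description of $\alpha P_f$: uncrossing tight sets (closure of $\lbr A: v(A)=\alpha f(A)\rbr$ under union, hence a maximal tight set $T_0$), observing that maximality forces $v=u$ off $T_0$, and then the explicit estimate using $u(E)=hf(E)$, $u(T_0)\le hf(T_0)$, $h\ge\alpha$, and monotonicity of $f$. Your route is more elementary and self-contained (it needs neither Lemma \ref{pspoly} nor the polymatroid axioms, only submodularity and monotonicity of $f$), and it yields extra structural information about where the ceiling binds; the paper's scaling trick is shorter and reuses machinery that is needed anyway for the algorithmic discussion (Remark \ref{ehandps} and the greedy oracle). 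All the individual steps you flag check out: $K$ is nonempty since $s\in\alpha P_f$ by (\ref{eq:alpha}), a maximizer of $x\mapsto x(E)$ over $K$ is componentwise maximal, $e\notin T_0$ lies in no tight set so only the ceiling can block, and the final chain of inequalities is valid, giving $v(E)=\alpha f(E)$.
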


\begin{proof}

Let $h \geq \alpha$. Assume that there exists $y \in A_{\alpha}$ such that $y \leq z^*_{h}$.
Then
\begin{equation}\label{eq:z1zh}
	m \cdot y \leq m \cdot z^*_{h},
\end{equation}
and by the definition of $z^*_{\alpha}$, we obtain
\begin{equation}\label{eq:zaz1}
	m \cdot z^*_{\alpha} \leq m \cdot y.
\end{equation}
Hence, we have (\ref{eq:rein-main}).

The rest of the proof is to show that such $y$ exists. Let $(\alpha P_f)^{s}$ be defined as in (\ref{eq:restrict-poly}), and let $y$ be an $(\alpha P_f)^{s}$-basic of $z^*_{h}$. Then, we obtain that $y \leq z^*_{h}$ and $s + y$ is an $(\alpha P_f)$-basic of $s + z^*_{h}$. The proof is completed if we can show that $y \in A_{\alpha}$, i.e., $(s + y)(E) = \alpha f(E)$. To that end, we denote
\[
x := \frac{\alpha}{h}(s + z^*_{h}) \leq (s + z^*_{h}).
\]
By the definition of $z^*_{h}$, we have $s + z^*_{h} \in hB_f$. Thus, $x \in \alpha B_f$, this implies that $x(E) = \alpha f(E)$. By Lemma \ref{maximal}, $x$ is maximal in $\alpha P_f$. Moreover, since $x \leq (s + z^*_{h})$, we have that $x$ is an $(\alpha P_f)$-basic of $s + z^*_{h}$. Note that $s + y$ is also an $(\alpha P_f)$-basic of $s + z^*_{h}$. By Definition \ref{def:polymatroid} (ii), $(s + y)(E) = x(E) = \alpha f(E)$. The proof is completed.

\end{proof}

\subsection{Polymatroid sparsification}\label{sec:spar-poly}

Given a polymatroid function $f$ on $E$, then $f$ is normalized, nondecreasing, and submodular. We define the set function $g$ as follows:

\begin{equation}\label{eq:superg}
	g(U):=f (E) - f (E \setminus U ) 
\end{equation} for all subsets $U \subseteq E$. It follows that $g(\emptyset) = f(E)-f(E) = 0$. For any set $A \subseteq B \subseteq E$, we have 

\begin{equation*}
	g(A) =f (E) - f (E \setminus A ) \leq f (E) - f (E \setminus B ) = g(B).
\end{equation*}
Furthermore, for all subsets $A,B \subseteq E$, by definition of $g$, we have

\[ g(A \cap B)+g(A \cup B) \geq g(A) +g(B).\]
Hence, the function $g$ is normalized, nondecreasing and supermodular.
Next, we associate the contrapolymatroid $Q_g$ with $g$. Notice that for any $e \in E$, we have $g(\lbr e \rbr) \geq g(\emptyset) =0$. Thus, the polyhedron $Q_g$ can be described as \[Q_g = \lbr x \in \R^E :x(A) \geq g(A), \forall A \subseteq E \rbr.\]
Let $C_g$ be defined as in (\ref{eq:base-contrapoly}).
Let $c$ be a constant such that $c \geq f(E)$. Denote $\mathbf{c}$ to be the vector of all $c$. Consider the following polyhedron:
\begin{equation}\label{eq:contrapoly-c}
	Q_{g,c} := \left\{ x \in \mathbb{R}^E :  \mathbf{c} \geq x, x(A) \geq g(A), \forall A \subseteq E \right\}.
\end{equation}
This polyhedron is bounded and hence is a polytope. Additionally, we define
\begin{equation}\label{eq:base-contrapoly-c}
	C_{g,c}:= \lbr x \in Q_{g,c} : x(E) = g(E)\rbr.
\end{equation} 
We aim to demonstrate that, under some translation, the polytope $Q_{g,c}$ maps to a polymatroid.

\begin{theorem}\label{thm:translation}
	Let  $t$: $\R^{E} \rightarrow \R^{E}$ such that $ t(x) = -x + \mathbf{c}$, $x \in \R^{E}$.  Then, $t(Q_{g,c})$ is a polymatroid.
\end{theorem}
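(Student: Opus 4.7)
The plan is to exhibit $t(Q_{g,c})$ explicitly as a polymatroid $P_h$ for a suitable polymatroid function $h$, and then invoke the fact, stated in the paper just after Definition~\ref{def:polymatroid}, that every set $P_h$ associated to a polymatroid function $h$ is a polymatroid.

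First I would rewrite the set $t(Q_{g,c})$ by substituting $y = -x + \mathbf{c}$, equivalently $x = \mathbf{c}-y$. The constraint $x \le \mathbf{c}$ becomes $y \ge 0$, while the constraint $x(A) \ge g(A)$ becomes $c|A| - y(A) \ge g(A)$, i.e.\ $y(A) \le c|A|-g(A)$. So defining
\begin{equation*}
h(A) := c|A|-g(A), \qquad A \subseteq E,
\end{equation*}
one obtains
\begin{equation*}
t(Q_{g,c}) = \{ y \in \R^E_{\ge 0} : y(A) \le h(A) \text{ for all } A \subseteq E \} = P_h.
\end{equation*}

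The core of the argument is then to verify that $h$ is a polymatroid function (Definition~\ref{def:poly-func}). Normalization $h(\emptyset)=0$ follows from $g(\emptyset)=0$. Submodularity of $h$ reduces to supermodularity of $g$, which was already established in the preamble to the theorem: the $c|A|$-part is modular, so $h(A)+h(B)-h(A\cup B)-h(A\cap B) = -(g(A)+g(B)-g(A\cup B)-g(A\cap B)) \ge 0$. The step I expect to take the most care is monotonicity. For $A \subseteq B$, the inequality $h(A) \le h(B)$ amounts to
\begin{equation*}
g(B)-g(A) = f(E\setminus A)-f(E\setminus B) \le c(|B|-|A|).
\end{equation*}
I would prove this by enumerating $B\setminus A=\{e_1,\dots,e_k\}$, setting $S_0 := E\setminus B$ and $S_i := S_{i-1}\cup\{e_i\}$, and telescoping $f(E\setminus A)-f(E\setminus B) = \sum_{i=1}^{k}\bigl(f(S_i)-f(S_{i-1})\bigr)$. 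By submodularity of $f$ each increment is bounded by $f(\{e_i\})-f(\emptyset)=f(\{e_i\})$, and by monotonicity of $f$ together with the hypothesis $c\ge f(E)$, we have $f(\{e_i\}) \le f(E) \le c$. Summing over $i$ gives the desired bound.

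Once $h$ is shown to be a polymatroid function, the conclusion is immediate: $t(Q_{g,c})=P_h$ is a polymatroid. The only subtle point is the role of the constant $c$: it needs to satisfy $c \ge f(E)$ so that monotonicity of $h$ works, which is exactly how $\mathbf{c}$ is chosen in~(\ref{eq:contrapoly-c}). The hypothesis $c \ge \Vert \sigma\Vert_\infty$, which is not needed for this theorem per se, will be used later to ensure that the translated sparsification problem fits into the polymatroid framework.
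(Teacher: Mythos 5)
Your proposal is correct and takes essentially the same route as the paper: substitute $y=\mathbf{c}-x$, identify $t(Q_{g,c})$ with $P_h$ for the same function $h(A)=c|A|-g(A)=c|A|-f(E)+f(E\setminus A)$, and verify that $h$ is normalized, submodular, and nondecreasing, so that $P_h$ is a polymatroid. The only (immaterial) difference is in the monotonicity check: you telescope over $B\setminus A$ using submodularity of $f$, whereas the paper uses the cruder one-line bound $f(E\setminus A)-f(E\setminus B)\le f(E\setminus A)\le f(E)\le c\le c\,|B\setminus A|$; both are valid.
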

\begin{proof}
	Let $x \in Q_{g,c}$, and $y = -x + \mathbf{c}$. Then, we have $y \geq 0$. Next, note that
	\begin{align*}
		x(A) \geq g(A) &\Leftrightarrow -y(A) + c|A| \geq f(E) - f(E \setminus A) \\
		&\Leftrightarrow y(A) \leq -f(E) + f(E \setminus A) + c|A|.
	\end{align*}
	
	We define a set function $h(A) := -f(E) + f(E \setminus A) + c|A|$. Then, $h$ is normalized, nondecreasing, and submodular. Indeed, by defintion of $h$, we have that $h$ is normalized, and submodular.
	For $A \subsetneq B \subseteq E$, we have 
	\begin{align*}
		-f(E) + f(E \setminus A) + c|A| &\leq -f(E) + f(E \setminus B) + c|B|\\
		\Leftrightarrow f(E \setminus A) - f(E \setminus B) &\leq c|B \setminus A|,
	\end{align*} which is true because $c \geq f(E) \geq f(E \setminus A)$.
	So, the function $h$ is nondecreasing. Therefore, $t(Q_{r,c})$ is a polymatroids.
\end{proof}

 In the rest of this section, we apply Theorem \ref{thm:translation} 
to provide corresponding results from Section \ref{sec:rein-poly}. 
\begin{lemma}
Let $Q := Q_{g,c}$ be defined as in (\ref{eq:contrapoly-c}). 
 A vector $x \in Q$ is  minimal in $Q$ if and only if $x(E) = g(E) = f(E)$.
\end{lemma}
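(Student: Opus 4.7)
The plan is to transfer the question about minimality in $Q := Q_{g,c}$ into a question about maximality in the polymatroid $t(Q_{g,c})$ produced by Theorem \ref{thm:translation}, and then apply the characterization of maximal vectors given in Lemma \ref{maximal}.

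First, I would note that the affine involution $t(x) = -x + \mathbf{c}$ is order-reversing in the coordinate-wise order: for $x, y \in \R^E$, $x \leq y$ if and only if $t(y) \leq t(x)$. Consequently, $x \in Q$ is minimal in $Q$ if and only if $y := t(x)$ is maximal in the image $t(Q)$. By Theorem \ref{thm:translation}, $t(Q) = P_h$ is a polymatroid, where (following the proof of that theorem) the associated polymatroid function is
\begin{equation*}
    h(A) := -f(E) + f(E \setminus A) + c|A|, \qquad A \subseteq E.
\end{equation*}
In particular, $h(E) = -f(E) + f(\emptyset) + c|E| = -f(E) + c|E|$.

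Next, I would apply Lemma \ref{maximal} to the polymatroid $P_h$: the vector $y \in P_h$ is maximal if and only if $y \in B_h$, i.e., $y(E) = h(E)$. Since $y = -x + \mathbf{c}$ gives $y(E) = -x(E) + c|E|$, the condition $y(E) = h(E)$ reduces to $-x(E) + c|E| = -f(E) + c|E|$, that is, $x(E) = f(E)$. Finally, from the definition $g(U) = f(E) - f(E \setminus U)$ we have $g(E) = f(E) - f(\emptyset) = f(E)$, so the condition may equivalently be written $x(E) = g(E) = f(E)$, completing both implications.

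There is no real obstacle here; the main care is in checking that $t$ sets up a bijection between $Q$ and the polymatroid $P_h$ that exchanges minima with maxima, and in computing $h(E)$ and $y(E)$ correctly so that the equality $y(E) = h(E)$ translates cleanly into $x(E) = f(E) = g(E)$.
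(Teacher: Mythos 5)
Your proposal is correct and follows exactly the route the paper intends: it applies the translation $t(x) = -x + \mathbf{c}$ of Theorem \ref{thm:translation} to turn minimality in $Q_{g,c}$ into maximality in the polymatroid $t(Q_{g,c}) = P_h$, and then invokes Lemma \ref{maximal} together with the computation $h(E) = c|E| - f(E)$ and $g(E) = f(E)$. The paper leaves this proof implicit (stating only that the results follow by applying Theorem \ref{thm:translation} to transfer the results of Section \ref{sec:rein-poly}), and your write-up supplies precisely that argument with the order-reversal and bookkeeping done correctly.
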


\begin{lemma}
 Given $s \in Q $, then, $	Q^s := \left\{ z \in \R^E_{\geq 0 } : s - z \in Q \right\}$ is a polymatroid under some translation.
\end{lemma}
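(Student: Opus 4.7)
The plan is to reduce to Lemma \ref{pspoly} by passing through the translation $t$ already constructed in Theorem \ref{thm:translation}. Set $P := t(Q_{g,c})$, where $t(x) = -x + \mathbf{c}$. By Theorem \ref{thm:translation}, $P$ is a polymatroid in $\R^E_{\geq 0}$, and since $s \in Q_{g,c}$ the image $s' := t(s) = -s + \mathbf{c}$ lies in $P$.

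The key step is to establish the set identity $Q^s = P^{s'}$, where $P^{s'} := \{z \in \R^E_{\geq 0} : s' + z \in P\}$ is the auxiliary polyhedron of Lemma \ref{pspoly}. Using that $t$ is an involution on $\R^E$, for any $z \in \R^E_{\geq 0}$ one computes
\[ t(s - z) = -(s - z) + \mathbf{c} = (-s + \mathbf{c}) + z = s' + z,\]
so $s - z \in Q_{g,c}$ if and only if $t(s-z) \in t(Q_{g,c}) = P$, that is, if and only if $s' + z \in P$. This gives $Q^s = P^{s'}$ as subsets of $\R^E_{\geq 0}$.

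Finally, Lemma \ref{pspoly} applied to the polymatroid $P$ and the point $s' \in P$ yields that $P^{s'}$ is a polymatroid, and hence so is $Q^s$. The phrase \emph{under some translation} in the statement just reflects that the argument is routed through the map $t$ of Theorem \ref{thm:translation}; no separate translation of $Q^s$ itself is required, since $Q^s$ already contains the origin (because $s \in Q_{g,c}$) and sits inside $\R^E_{\geq 0}$. There is no genuine obstacle, as the proof reduces to the one-line involution computation above together with the two previous results; the only point one must double-check is that the defining inequalities $s-z \leq \mathbf{c}$ and $(s-z)(A) \geq g(A)$ of $Q_{g,c}$ become the polymatroid inequalities of $P$ under $t$, which is exactly the content of the proof of Theorem \ref{thm:translation}.
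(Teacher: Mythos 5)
Your proof is correct and follows exactly the route the paper intends (the lemma is stated there without proof, immediately after the remark that Theorem \ref{thm:translation} is to be applied to transfer the results of Section \ref{sec:rein-poly}): conjugate by the involution $t$, observe $t(s-z)=t(s)+z$, and invoke Lemma \ref{pspoly} for the polymatroid $t(Q_{g,c})$. Your additional observation that the identity $Q^s = P^{t(s)}$ makes $Q^s$ a polymatroid outright, with no further translation of $Q^s$ needed, is a valid (slightly sharper) reading of the statement.
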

\begin{lemma}
	 Given $s \in Q $ and $z \in Q^s$ where $Q^s := \left\{ z \in \R^E_{\geq 0 } : s - z \in Q \right\}$, then, $z$ is minimal in $Q^s$ if and only if $(s-z)(E)= f(E)$.
\end{lemma}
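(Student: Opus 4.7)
The plan is to derive this as the contrapolymatroid analogue of Lemma \ref{maximalps}, by reducing everything to the polymatroid side via the order-reversing translation $t(x) = -x + \mathbf{c}$ of Theorem \ref{thm:translation}. First, I set $s' := t(s) = \mathbf{c} - s$. Since $s \in Q = Q_{g,c}$, we have $s' \in P := t(Q)$, which by Theorem \ref{thm:translation} is a polymatroid with polymatroid function $h(A) = c|A| - f(E) + f(E\setminus A)$; in particular $h(E) = c|E| - f(E)$.

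Next, I would identify the two constructions as equal sets. For every $z \in \R^E_{\geq 0}$,
\[
s - z \in Q \iff t(s-z) \in P \iff s' + z \in P,
\]
so $Q^s = P^{s'}$ as subsets of $\R^E_{\geq 0}$. By Lemma \ref{pspoly}, this common set is a polymatroid, which allows me to apply Lemma \ref{maximalps} to $P^{s'}$ with polymatroid function $h$: the relevant extremal vector $z$ in $P^{s'}$ is characterized by $(s' + z)(E) = h(E)$. Expanding this,
\[
(c|E| - s(E)) + z(E) = c|E| - f(E),
\]
which simplifies to $(s - z)(E) = f(E)$, exactly the stated equivalence.

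The only conceptual subtlety, and what I would spell out most carefully, is the order reversal induced by $t$. Since $t$ is affine with negative slope, it flips componentwise inequalities: a vector $s - z$ attaining the minimum component sum $g(E) = f(E)$ in the contrapolymatroid $Q$ (the ``minimality'' the lemma refers to in $Q^s$) corresponds under $t$ to $s' + z$ attaining the maximum component sum $h(E)$ in the polymatroid $P$, i.e.\ to $z$ being maximal in $P^{s'}$ in the poset sense. Once this correspondence between the two notions of extremality is made explicit, the lemma follows as a direct consequence of Theorem \ref{thm:translation} together with Lemma \ref{maximalps}, and no further nontrivial argument is needed beyond the arithmetic above.
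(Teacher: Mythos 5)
Your proposal is correct and takes exactly the route the paper intends but leaves implicit: the paper offers no separate proof of this lemma, stating only that the results of Section \ref{sec:spar-poly} follow by applying Theorem \ref{thm:translation}, and you carry this out faithfully by identifying $Q^s = P^{s'}$ with $s' = \mathbf{c}-s$, invoking Lemmas \ref{pspoly} and \ref{maximalps} for the polymatroid $P = t(Q_{g,c})$ with function $h(A) = c|A| - f(E) + f(E\setminus A)$, and reducing $(s'+z)(E) = h(E)$ to $(s-z)(E) = f(E)$. Your explicit handling of the order reversal --- reading the lemma's ``minimal'' as minimality of $s-z$ in $Q$, i.e.\ maximality of $z$ in $Q^s = P^{s'}$ (taken literally, only $z=0$ is minimal in $Q^s$) --- is the correct interpretation, consistent with how the paper uses this fact for the sparsification algorithm, and it supplies the one clarification the paper's terse statement needs.
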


Given a per-unit increasing cost $m(e) \geq 0 $ for each $e \in E$ and  $s \in \R^E_{>0}$. Let $c$ be a constant such that $c \geq \max \lbr f(E),\Vert s \Vert_{\infty} \rbr$. We recall the polymatroid sparsification problem in (\ref{eq:spar-poly}) where we replace $C_g$ by $C_{g,c}$ and $\si=s$. In fact, we have $C_g= C_{g,c}$ (see Theorem \ref{thm:contrapoly-prop}).  The feasible set of the  polymatroid sparsification problem is 	$\displaystyle \bigcup_{ h>0} F_h$ where $	F_h :=  \left\{  z\in \R^E_{\geq 0}:  s- z \in hC_{g,c} \right\}$.
\begin{remark}
Let
$\beta := \max \left\{ h>0: s \in hQ_{g}  \right\}$ be defined as in (\ref{eq:beta}). Since we assume that $c \geq \max \lbr f(E),\Vert s \Vert_{\infty} \rbr$, we have that $\beta = \max \left\{ h>0: s \in hQ_{g,c}  \right\}$.  	
\end{remark}
\begin{lemma}
Let $\beta = \max \left\{ h>0: s \in hQ_{g,c}  \right\}$.
	Then, $F_h \neq \emptyset$ if and only if $h \leq \beta $.
\end{lemma}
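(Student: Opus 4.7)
The plan is to mirror the structure of Lemma \ref{CD} (the reinforcement analog), noting that the forward and backward implications rely on different tools: the forward direction is a straightforward monotonicity argument inside $hQ_g$, while the backward direction calls for producing a point of $hC_g$ that lies below $s$.

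For the forward direction I would argue as follows. Let $z \in F_h$. Since $Q_{g,c} \subseteq Q_g$, one has $hC_{g,c} \subseteq hQ_{g,c} \subseteq hQ_g$. Consequently $(s-z)(A) \geq h g(A)$ for every $A \subseteq E$, and since $z \geq 0$ gives $s \geq s-z$ componentwise, I can conclude $s(A) \geq (s-z)(A) \geq h g(A)$ for every $A$. Together with $s \geq 0$ this says $s \in hQ_g$, which forces $h \leq \beta$ by the definition in (\ref{eq:beta}).

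For the backward direction, my first observation is that $hC_g = hC_{g,c}$ under the blanket hypothesis $c \geq f(E)$: any $x \in hC_g$ satisfies $x(e) \leq x(E) = h f(E) \leq h c$, so the extra box constraint in $hC_{g,c}$ is automatic. Consequently it suffices to produce $x \in hC_g$ with $x \leq s$ and set $z := s - x$. Since $h \leq \beta$ and $g(A) \geq 0$ give $s(A) \geq \beta g(A) \geq h g(A)$, the vector $s$ itself lies in $hQ_g$, so the compact set $R := \{x \in \R^E_{\geq 0} : x \leq s,\ x(A) \geq h g(A)\ \forall A \subseteq E\}$ is nonempty. I would then select $x^*$ minimizing $x(E)$ over $R$, and reduce the task to verifying that $x^*(E) = h f(E)$.

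The main obstacle is precisely this last claim. The lower bound $x^*(E) \geq h g(E) = h f(E)$ is immediate from $x^* \in hQ_g$. For the reverse inequality I plan to invoke the standard contrapolymatroid tight-set argument: the family $\cF := \{A \subseteq E : x^*(A) = h g(A)\}$ is closed under union and intersection thanks to the supermodularity of $g$ combined with the modularity of $x^*$. Componentwise minimality of $x^*$ (which follows from its being a minimizer of $x(E)$ in $R$) forces every $e \in \mathrm{supp}(x^*)$ to lie in some tight $A \ni e$, hence in $A^* := \bigcup \cF \in \cF$. Thus $x^*(E) = x^*(A^*) = h g(A^*) \leq h g(E) = h f(E)$ by the monotonicity of $g$, giving $x^* \in hC_g = hC_{g,c}$ and $z := s - x^* \in F_h$, as required.
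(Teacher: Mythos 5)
Your proof is correct, but it takes a different route from the one the paper intends. The paper states this lemma without proof, the implicit argument being: translate by $t(x)=-x+\mathbf{c}$ (Theorem \ref{thm:translation}) so that $Q_{g,c}$ becomes a polymatroid, and then repeat the argument of Lemma \ref{CD}, where the nontrivial direction ($h\leq\beta$ implies $F_h\neq\emptyset$) comes from extending $s$ to a minimal vector of $hQ_{g,c}$ and invoking the polymatroid basis property (the analogue of Lemmas \ref{maximal} and \ref{maximalps}). You instead prove the needed contrapolymatroid fact from first principles: you take a minimizer $x^*$ of $x(E)$ over $R=\{0\leq x\leq s,\ x(A)\geq hg(A)\}$, observe it is componentwise minimal, and use uncrossing of tight sets (supermodularity of $g$ plus modularity of $x^*$, together with monotonicity of $g$) to force $x^*(E)=hg(E)$, hence $x^*\in hC_g=hC_{g,c}$ by the scaled version of Proposition \ref{thm:contrapoly-prop}(i). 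Your easy direction is the same monotonicity argument as in Lemma \ref{CD}. What your route buys is self-containedness: it does not rely on Theorem \ref{thm:translation} or on the polymatroid axiom (ii), and it is actually more complete than the paper's template, since the backward direction of Lemma \ref{CD} as written ("$0\in A_h$") is too quick and really needs the maximal-extension step you carry out explicitly on the contrapolymatroid side. What the paper's route buys is brevity, reusing the Section \ref{sec:rein-poly} machinery verbatim after the translation. Both arguments implicitly assume $f(E)>0$ so that the max defining $\beta$ is attained, which is consistent with the paper's standing conventions.
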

\begin{theorem}\label{thm:spar-main}
	 Given $s \in \R^E_{>0}$ and $m \in \R^E_{\geq 0}$. For every $0<h \leq \beta $, let $z^*_h$ be an optimal solution of the problem $\min\limits_{z\in F_h} m \cdot z$. Then,
	\begin{equation*}
		m \cdot z^*_{h} \geq m \cdot z^*_{\beta }  \qquad \forall h \leq \beta .
	\end{equation*}
In other words, the polymatroid sparsification problem is equivalent to $\min\limits_{z \in F_{\beta }} m \cdot z.$
\end{theorem}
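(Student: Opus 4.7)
The plan is to mirror the proof of Theorem~\ref{thm:rein-main} almost verbatim, using the contrapolymatroid analogs of Lemmas~\ref{maximal}, \ref{pspoly}, and \ref{maximalps} stated immediately above the theorem. As in the reinforcement case, it suffices to produce, for each $h\leq\beta$, some $y\in F_\beta$ with $y\leq z^*_h$: the two inequalities $m\cdot z^*_\beta\leq m\cdot y\leq m\cdot z^*_h$ then follow from $m\geq 0$ and the optimality of $z^*_\beta$ in $F_\beta$.

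To construct such $y$, I would set $Q:=\beta Q_{g,c}$ and $Q^s:=\{z\in\R^E_{\geq 0}:s-z\in Q\}$. By the stated lemmas, $Q^s$ carries a polymatroid structure (via a translation), and its minimal elements are precisely the $z\in Q^s$ with $(s-z)(E)=\beta f(E)$, i.e.\ the elements of $F_\beta$. I would let $y$ be a minimal element of $Q^s$ subject to $y\leq z^*_h$; this is the $Q^s$-analog of a ``$P^s$-basic of $z^*_h$.'' By the polymatroid axiom of Definition~\ref{def:polymatroid}(ii), any two such $y$ share a common component sum, and it remains to check that this common sum equals $s(E)-\beta f(E)$.

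For this last step I would mirror the witness argument in the proof of Theorem~\ref{thm:rein-main}. Let $w^*:=(\beta/h)(s-z^*_h)$. Scaling the three defining conditions of $s-z^*_h\in hC_{g,c}$ by the factor $\beta/h\geq 1$ yields $w^*\leq\beta\mathbf{c}$, $w^*(A)\geq\beta g(A)$ for every $A$, and $w^*(E)=\beta f(E)$, so $w^*\in\beta C_{g,c}$; moreover $w^*\geq s-z^*_h$ is immediate. When the vector $y^*:=s-w^*$ is nonnegative, $y^*$ is itself an admissible minimizer in $Q^s$ with $y^*\leq z^*_h$ and $(s-y^*)(E)=\beta f(E)$, and the polymatroid axiom transfers this sum to every other such minimizer, placing $y$ in $F_\beta$.

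The main obstacle is that the sign condition $y^*\geq 0$, equivalently $w^*\leq s$, is not automatic: in the reinforcement proof the analogous witness $x=(\alpha/h)(s+z^*_h)$ satisfied $x\leq s+z^*_h$ trivially because $\alpha/h\leq 1$, but here $\beta/h\geq 1$ inflates $w^*$ and may push it above $s$. When that happens, I would replace the explicit scaling witness by a polymatroid-truncation argument applied to the translated polymatroid from Theorem~\ref{thm:translation}. The key ingredient is the bound $(s-z^*_h)(B)\leq hf(B)\leq\beta f(B)$ for every $B\subseteq E$, obtained by combining $s-z^*_h\in hC_{g,c}$ (applied to $E\setminus B$) with $g(B)=f(E)-f(E\setminus B)\leq f(B)$, a standard consequence of the submodularity of $f$. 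This bound is precisely the hypothesis required to show that the box between $\beta\mathbf{c}-s$ and $\beta\mathbf{c}-(s-z^*_h)$ meets the base polytope of the translated polymatroid; pulling back through $t$ yields a valid $Q^s$-minimizer below $z^*_h$ with the required component sum, completing the proof.
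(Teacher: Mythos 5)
The obstacle you flag is not actually there, and diagnosing why matters, because the fix you substitute for it rests on an unproven claim. In the paper's intended mirror of Theorem~\ref{thm:rein-main}, Definition~\ref{def:polymatroid}(ii) is applied in the \emph{ambient} translated polymatroid $P':=t(\beta Q_{g,c})$ (Theorem~\ref{thm:translation}, scaled by $\beta$, with $c$ large enough that $s\le\beta\mathbf{c}$), not inside $Q^s$. Write $s':=\beta\mathbf{c}-s$. Your $y$ --- a \emph{maximal} element of $Q^s$ below $z^*_h$ (not ``minimal'': $Q^s$ is down-closed, so its only minimal element is $0$; the paper's lemma really describes the $z$ for which $s-z$ is minimal in $\beta Q_{g,c}$) --- corresponds to $s'+y$, which is a $P'$-basic of $s'+z^*_h$ by the argument of Lemma~\ref{pspoly}. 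The witness is $t(w^*)=\beta\mathbf{c}-w^*$ with $w^*=(\beta/h)(s-z^*_h)$: scaling $s-z^*_h\in hC_{g,c}$ gives $w^*\in\beta C_{g,c}$, so $t(w^*)$ lies in the base polytope of $P'$ and is maximal in $P'$ by Lemma~\ref{maximal}; and $t(w^*)\le s'+z^*_h$ is equivalent to $w^*\ge s-z^*_h$, which holds automatically since $\beta/h\ge1$ and $s-z^*_h\ge0$. That inequality --- not $w^*\le s$ --- is the mirror of ``$x\le s+z^*_h$'' in the reinforcement proof, because the translation reverses the order ($\alpha/h\le1$ there becomes $\beta/h\ge1$ here). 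The witness never needs to be of the form $s-(\text{nonnegative vector})$; only $s'+y$ must lie above $s'$, and it does by construction. Applying Definition~\ref{def:polymatroid}(ii) in $P'$ to these two basics of $s'+z^*_h$ gives $(s-y)(E)=w^*(E)=\beta f(E)$, i.e.\ $y\in F_\beta$, and the theorem follows with no case split --- this is what ``the same manner'' amounts to.

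Because you instead set the comparison inside $Q^s$, you were driven to the case $w^*\not\le s$, and there your argument has a genuine gap: the assertion that the bound $(s-z^*_h)(B)\le\beta f(B)$ is ``precisely the hypothesis required'' for the box between $\beta\mathbf{c}-s$ and $\beta\mathbf{c}-(s-z^*_h)$ to meet the base polytope of $P'$ is exactly a sandwich statement ($a\in\beta P_f$, $b\in\beta Q_g$, $a\le b$ imply $\beta B_f\cap\lbr u: a\le u\le b\rbr\neq\emptyset$) that is true but is proved neither in the paper nor in your proposal; it needs its own argument (e.g.\ greedily increase $a$ within $\beta P_f\cap\lbr u\le b\rbr$, use that tight sets are closed under union to get a tight set $T$ with the remaining coordinates $b$-tight, and then $b(E\setminus T)\ge\beta g(E\setminus T)$ forces the component sum up to $\beta f(E)$). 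So, as written, the proof is incomplete precisely where it departs from the paper; once the axiom is applied in $P'$ as above, the detour and the unproven claim become unnecessary and your argument collapses back to the paper's.
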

 \begin{proof}
This can be proved in the same manner as in the proof of Theorem \ref{thm:rein-main}.
 \end{proof}

\subsection{Relationship between $P_f$ and $Q_g$}\label{sec:relationship-poly}
Given a polymatroid function $f$ on $E$, recall that $f$ is normalized, nondecreasing, and submodular. Let the set function $g$ be defined as in equation \eqref{eq:superg}. In this section, we explore several properties of $P_f$ and $Q_g$.

\begin{proposition}\label{thm:p=co-}
	Given a polymatroid function $f$ on $E$. Let $P_f$ be the associated polymatroid of $f$ and $B_f$ be the base polytope of $f$, then
	\begin{equation}
		P_f = (B_f - \R^E_{\geq 0}) \cap  \R^E_{\geq 0}.
	\end{equation}
\end{proposition}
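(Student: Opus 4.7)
The plan is to prove the equality by double inclusion, using Lemma \ref{maximal} (which identifies $B_f$ as the set of maximal vectors of $P_f$) as the only nontrivial tool.

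For the inclusion $P_f \supseteq (B_f - \R^E_{\geq 0}) \cap \R^E_{\geq 0}$, I would take an arbitrary $x$ in the right-hand side and write $x = y - w$ with $y \in B_f$ and $w \in \R^E_{\geq 0}$. Since $B_f \subseteq P_f$ by definition, $y \in P_f$, and the constraint $x \in \R^E_{\geq 0}$ together with $x \leq y$ gives $0 \leq x \leq y$. Property (i) of Definition \ref{def:polymatroid} then yields $x \in P_f$. This direction is immediate.

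The other inclusion $P_f \subseteq (B_f - \R^E_{\geq 0}) \cap \R^E_{\geq 0}$ is the substantive one. Given $x \in P_f$, we have $x \geq 0$ automatically, so it suffices to find $y \in B_f$ with $y \geq x$, and then write $x = y - (y - x)$. To produce such a $y$, I would argue as in the proof of Lemma \ref{maximalps}: starting from $x$, successively increase each coordinate as much as possible while remaining in $P_f$. The resulting vector $y$ satisfies $y \geq x$ and is maximal in $P_f$, since otherwise some coordinate could have been increased further in the process. By Lemma \ref{maximal}, maximality in $P_f$ is equivalent to membership in $B_f$, so $y \in B_f$.

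There is no real obstacle here; the only subtlety is justifying that the greedy extension process terminates at a vector that is genuinely maximal in $P_f$ (not merely coordinatewise un-improvable after a finite number of steps). This is handled by the same contradiction used in Lemma \ref{maximalps}: if some $\epsilon > 0$ and $e \in E$ allowed $y + \epsilon \ones_{\{e\}} \in P_f$, then the $e$-coordinate of $y$ would have been raised by at least $\epsilon$ more during the construction. Once that point is made, the two inclusions combine to give the claimed identity.
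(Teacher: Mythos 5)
Your proposal is correct and follows essentially the same route as the paper: the easy inclusion via downward closedness (Definition \ref{def:polymatroid}(i)), and the harder one by extending $x \in P_f$ to a maximal vector $x + x' \in P_f$ and invoking Lemma \ref{maximal} to place it in $B_f$. The only difference is that you spell out the greedy-extension justification (as in Lemma \ref{maximalps}) where the paper simply asserts the existence of such an $x'$.
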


\begin{proof}
	Let $x \in  (B_f- \R^E_{\geq 0}) \cap  \R^E_{\geq 0}$, then \[ 0 \leq x = y - r, \] for some $y\in B_f$ and $r \in \R^E_{\geq 0}$. Hence,
	
	\[ x \leq y. \]
	Because $y \in P_f$, we have  $x \in P_f$. Therefore, \[(B_f - \R^E_{\geq 0}) \cap  \R^E_{\geq 0} \subset P_f. \]
	
	Let $x \in P_f$. Then, there exists $x'\geq 0 $ such that $x+x'$ is maximal in $P_f$. By Lemma \ref{maximal}, we have that \[x+x' \in B_f.\]
	Hence, \[0 \leq x =(x+x')-x' \in B_f - \R^E_{\geq 0}.\] Therefore, $P_f \subset (B_f - \R^E_{\geq 0}) \cap  \R^E_{\geq 0}.$ The proof is completed. 
\end{proof}
\begin{proposition}\label{thm:contrapoly-prop}
Given a polymatroid function $f$ on $E$. Let the set function $g$ be defined as in (\ref{eq:superg}).	Let $c$ be a real number such that $c \geq f(E)=g(E)$. Let $Q_{g}, C_{g}, Q_{g,c}$, and $C_{g,c}$ are defined as in (\ref{eq:contrapoly}), (\ref{eq:base-contrapoly}), (\ref{eq:contrapoly-c}), and (\ref{eq:base-contrapoly-c}), respectively. Then,
\bi
\item[(i)]
\begin{equation}\label{eq:poly-identity1}
	C_{g,c} = C_{g}.
\end{equation}
\item[(ii)]
\begin{equation}\label{eq:poly2-identity1}
	Q_{g,c} = (C_{g,c} + \R^E_{\geq 0}) \cap \lbr x \in \R^E_{\geq 0}: \mathbf{c} \geq x \rbr,
\end{equation}
\item[(iii)]
\begin{equation}\label{eq3:poly-identity1}
	Q_g = C_{g} + \R^E_{\geq 0},
\end{equation}
\ei

\end{proposition}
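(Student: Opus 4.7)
The plan is to prove the three parts in the order stated, noting that (iii) will follow from (i) and (ii) by an appropriate choice of the constant $c$. For (i), I would argue both inclusions directly. For $C_{g,c} \subseteq C_g$, the key observation is that every $x \in Q_{g,c}$ automatically satisfies $x \geq 0$, since for each singleton $\{e\}$ the defining inequality gives $x(e) \geq g(\{e\}) = f(E) - f(E \setminus \{e\}) \geq 0$ by monotonicity of $f$. For the reverse inclusion $C_g \subseteq C_{g,c}$, if $x \in C_g$ then $0 \leq x(e) \leq x(E) = g(E) = f(E) \leq c$ for every $e \in E$, so $x \leq \mathbf{c}$.

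The main work is in (ii). The inclusion $(C_{g,c} + \R^E_{\geq 0}) \cap \lbr x \in \R^E_{\geq 0} : \mathbf{c} \geq x \rbr \subseteq Q_{g,c}$ is immediate since adding a nonnegative vector to a point of $C_{g,c}$ preserves the constraint $x(A) \geq g(A)$. For the reverse direction, I would take $x \in Q_{g,c}$ and invoke Theorem \ref{thm:translation}: the involution $t(y) := -y + \mathbf{c}$ sends $Q_{g,c}$ bijectively onto a polymatroid $P := t(Q_{g,c})$. Since $t(x) \in P$ and $P$ is compact, one can find $r \geq 0$ such that $t(x) + r$ is maximal in $P$; by Lemma \ref{maximal} applied to $P$, this maximal vector lies in the base polytope of $P$. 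Applying $t$ once more (it is its own inverse) produces $y := t(t(x) + r) = x - r$, which is minimal in $Q_{g,c}$ because $t$ reverses the componentwise order. The lemma preceding the proposition then yields $y(E) = g(E)$, so $y \in C_{g,c}$, giving the desired decomposition $x = y + r$.

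For (iii), the inclusion $C_g + \R^E_{\geq 0} \subseteq Q_g$ is again immediate from the defining inequality of $Q_g$. For the converse, given $x \in Q_g$, I would pick a constant $c$ with $c \geq f(E)$ and $c \geq \max_{e \in E} x(e)$, so that $x \in Q_{g,c}$. By (ii), write $x = y + r$ with $y \in C_{g,c}$ and $r \geq 0$, and then (i) gives $y \in C_{g,c} = C_g$.

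The essential obstacle is the decomposition step in (ii): this is the only place where the polymatroid structure is genuinely used. Once Theorem \ref{thm:translation} is available, the argument reduces to transporting the standard basic-extension property (Lemma \ref{maximal}) through the translation $t$ and reinterpreting maximal vectors in $t(Q_{g,c})$ as minimal vectors in $Q_{g,c}$; all remaining containments amount to routine verification from the definitions.
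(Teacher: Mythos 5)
Your proposal is correct and follows essentially the same route as the paper: part (i) by comparing the defining constraints (automatic nonnegativity from singleton inequalities and $x(e)\leq x(E)=g(E)\leq c$), part (ii) by transporting the maximal-extension/base-polytope argument (Lemma \ref{maximal}) through the translation $t$ of Theorem \ref{thm:translation}, and part (iii) by reducing to (i) and (ii) with a sufficiently large $c$. The only cosmetic differences are that you inline the decomposition of Proposition \ref{thm:p=co-} in the translated picture rather than citing it, and you choose $c$ pointwise for each $x\in Q_g$ where the paper writes $Q_g$ as the union of the $Q_{g,c}$ over $c>f(E)$.
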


\begin{proof}
For (i), denote $J := \lbr x \in\R^E_{\geq 0}: x(E)=g(E)\rbr$ and $K_c := \lbr x \in\R^E_{\geq 0}: x \leq \textbf{c} \rbr$. Then, we obtain that $ J \subset K_c$. Hence
\begin{align}
	C_{g,c} = Q_{g,c} \cap J =  Q_{g} \cap K_c \cap J = Q_{g} \cap J = C_g.
\end{align}
By the proof of Theorem \ref{thm:translation} and Proposition \ref{thm:p=co-}, we obtain (ii). Finally, for (iii),

\begin{align*}
	Q_{g}   &= \bigcup\limits_{c > f(E)} Q_{g,c} \\
	&= \bigcup\limits_{c > f(E)} \left((C_{g,c} + \R^E_{\geq 0})  \cap K_c \right)\\
	& =\bigcup\limits_{c > f(E)} (C_{g} + \R^E_{\geq 0})  \cap K_c \\
	&=(C_{g} + \R^E_{\geq 0}) \cap \left( \bigcup\limits_{c > f(E)} K_c \right)\\
	&=C_{g} + \R^E_{\geq 0}.
\end{align*}

\end{proof}

\begin{theorem}\label{thm:hab}
	 Given a polymatroid function $f$ on $E$. Let the set function $g$ be defined as in (\ref{eq:superg}). 	Given $s \in \R^E_{>0}$, let $\alpha$ and $\beta$ be defined as in (\ref{eq:alpha}) and (\ref{eq:beta}). Let $P_f,B_f, Q_g$, and $C_g$ be defined as in (\ref{eq:polymatroid}), (\ref{eq:basepolytope}), (\ref{eq:contrapoly}),  and (\ref{eq:base-contrapoly}). Then, for $h>0$,
	
	\bi

	\item[ (i)]  $s \in hP_f$ if and only if $h \geq \alpha$.
	
	\item[ (ii)] $s \in hQ_g$ if and only if $h \leq \beta$.
	
	\item [ (iii)] $\alpha  \geq \beta$, and $s \in hB_f$ if and only if $h = \alpha=\beta$.
	\ei
\end{theorem}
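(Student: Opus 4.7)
The plan is to reduce parts (i) and (ii) to elementary monotonicity properties of $hP_f$ and $hQ_g$ in the scale parameter $h$, and then derive (iii) from closed-form max/min characterizations of $\alpha$ and $\beta$ combined with the identity $B_f = C_g$ and the basic fact $g(E) = f(E)$. For (i), I would observe that $s \in hP_f \iff s/h \in P_f$; by Definition \ref{def:polymatroid}(i), if $s/h \in P_f$ and $h' \geq h$, then $0 \leq s/h' \leq s/h$ forces $s/h' \in P_f$, so $\{h>0 : s \in hP_f\}$ is an upward-closed ray and its infimum $\alpha$ is attained (the constraints $s(A) \leq hf(A)$ are closed). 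Part (ii) is symmetric: $s \in hQ_g$ encodes the system $(s/h)(A) \geq g(A)$ together with $s/h \geq 0$, which persists when $h$ decreases because $s/h'$ pointwise dominates $s/h$ for $h' \leq h$; hence $\{h>0 : s \in hQ_g\}$ is a bounded downward-closed interval attaining its supremum $\beta$.

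For the inequality $\alpha \geq \beta$ in (iii), the key is to rewrite both constraints in ratio form. The inequalities $s(A) \leq h f(A)$ give
\[
\alpha \;=\; \max_{A \subseteq E,\, f(A) > 0} \frac{s(A)}{f(A)},
\]
while the inequalities $s(A) \geq h g(A)$ give
\[
\beta \;=\; \min_{A \subseteq E,\, g(A) > 0} \frac{s(A)}{g(A)}.
\]
Specializing both to $A = E$ and using $g(E) = f(E) - f(\emptyset) = f(E)$ from \eqref{eq:superg} yields the squeeze
\[
\alpha \;\geq\; \frac{s(E)}{f(E)} \;=\; \frac{s(E)}{g(E)} \;\geq\; \beta,
\]
which establishes $\alpha \geq \beta$.

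For the equivalence in (iii), I would exploit $B_f = C_g$ to unfold $s \in hB_f$ in two parallel ways. Via $B_f$: $s \in hB_f$ iff $s \in hP_f$ and $s(E) = h f(E)$, so (i) forces $h \geq \alpha$ while the equation $h = s(E)/f(E)$ combined with $\alpha \geq s(E)/f(E)$ pins $h = \alpha$. Via $C_g$: the same membership is equivalent to $s \in hQ_g$ together with $s(E) = h g(E) = h f(E)$, so (ii) gives $h \leq \beta$ and the squeeze forces $h = \beta$. Conversely, if $h = \alpha = \beta$, the squeeze collapses to $\alpha = s(E)/f(E) = \beta$, giving $s(E) = h f(E)$ and, via (i), $s/h \in P_f$; thus $s/h \in B_f$, i.e., $s \in hB_f$. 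The only subtle point I foresee is a mild non-degeneracy assumption $f(\{e\}) > 0$ for every $e \in E$, which is needed for the minimum defining $\alpha$ to be attained when $s \in \R^E_{>0}$; this is the main (and minor) obstacle, and under this standard hypothesis each step above is routine once the ratio formulas for $\alpha$ and $\beta$ are in hand.
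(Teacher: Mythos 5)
Your proposal is correct and follows essentially the same route as the paper: parts (i) and (ii) come straight from the definitions of $\alpha$ and $\beta$ via the scaling/monotonicity of the constraint systems, and part (iii) is the squeeze $\alpha \geq s(E)/f(E) = s(E)/g(E) \geq \beta$ together with $B_f = C_g$ and parts (i)--(ii). Your explicit ratio formulas and the non-degeneracy caveat $f(\{e\})>0$ only make precise what the paper leaves implicit (e.g.\ the loopless/nontrivial setting needed for $\alpha$ and $\beta$ to be well defined), so there is no substantive difference.
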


\begin{proof} 
By definition of $\alpha$ and $\beta$, we have (i) and (ii). For (iii), we have that $s \in \alpha P_f$ and $s \in \beta B_g$, then $\alpha f(E) \geq s(E) \geq \beta g(E)$. Since $f(E)=g(E)$, we have that $\alpha  \geq \beta$. It is well-known that $B_f=C_g$, this is because given that $x(E)=f(E)$, for any subset $A \subseteq E$,
 
\begin{align}
	x(A) \leq f(A) &\Leftrightarrow x(E)-x(E-A) \leq f(A)\\
	&\Leftrightarrow f(E)-x(E-A) \leq f(A)\\
	&\Leftrightarrow x(E-A) \geq f(E)- f(A).
\end{align}

Assume that $h = \alpha=\beta$. Then, $\alpha f(E) = s(E) = g(E)$ and $s \in hB_f = hC_g$. 

Assume that  $s \in hB_f = hC_g$. By definition of $\alpha$ and $\beta$, we have $ \beta \geq h \geq \alpha$.
Note that $hf(E) = s(E)  = hg(E)$, we obtain $ \alpha \geq h \geq \beta$. Therefore, $h = \alpha=\beta$.

\end{proof}

\section{Matroid reinforcement and sparsification}\label{sec:sec-mat-rein-spar}

\subsection{Some properties of strength and fractional arboricity}\label{sec:sd-prob-mat}
In this section, we give some properties for strength and fractional arboricity of matroids. Given a matroid $ M(E,\cI) $ with weights $\si \in \R^E_{>0}$. Let $f$ be the rank function of $M$. Let the set function $g$ be defined as in (\ref{eq:superg}). Let $P_f,B_f, Q_g$, and $C_g$ be defined as in (\ref{eq:polymatroid}), (\ref{eq:basepolytope}), (\ref{eq:contrapoly}),  and (\ref{eq:base-contrapoly}).

\begin{lemma}
	Given a matroid $ M(E,\cI) $ with weights $\si \in \R^E_{\geq 0}$. Let $\cB $ be the base family of $M$.   Let $S_{\si}(M)$ be the strength of $M$. Let $D_{\si}(G)$ be the fractional arboricity of $M$. If $\si(e)=0 $ for some $e\in E$, we have
	\begin{equation}\label{eq:S-edge0}
	S_{\si}(M) = S_{\si}(M\setminus\lbr e \rbr),
	\end{equation}
	and
	\begin{equation}\label{eq:D-edge0}
		D_{\si}(M) = D_{\si}(M\setminus\lbr e \rbr).
	\end{equation}

\end{lemma}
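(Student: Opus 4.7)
My plan is to prove both identities directly from the definitions, leveraging the single observation that $\si(e) = 0$ makes $\si(X)$ insensitive to whether $e \in X$. Write $E' := E - \lbr e \rbr$ and let $r'$ denote the rank function of $M \setminus \lbr e \rbr$; by the definition of deletion, $r'(A) = r(A)$ for all $A \subseteq E'$. Toggling membership of $e$ in $X$ can change $r(X)$ or $r(E-X)$ by at most one, always in a controlled direction, and this is all I need to align the two optimization problems.

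For $D_\si$, the inequality $D_\si(M \setminus \lbr e \rbr) \leq D_\si(M)$ is immediate since every $X \subseteq E'$ with $r'(X) > 0$ is also feasible in $M$ with the same ratio. For the reverse, I would take any $X \subseteq E$ with $r(X) > 0$ and pass to $X' := X - \lbr e \rbr$. Since $\si(X') = \si(X)$ and $r(X') \leq r(X)$, the ratio $\si(X)/r(X)$ is dominated by $\si(X')/r(X')$ whenever $r(X') > 0$, and the latter is witnessed inside $M \setminus \lbr e \rbr$. The only subtle case is $r(X') = 0$: by looplessness this forces $X' = \emptyset$, hence $X = \lbr e \rbr$, whose ratio is $0$ and cannot influence the maximum.

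For $S_\si$, the symmetric move is to adjoin $e$ rather than delete it. Any feasible $X$ becomes $X^+ := X \cup \lbr e \rbr$ with the same numerator and weakly smaller $r(E - X^+)$, hence a weakly smaller ratio, so the minimum is attained on a set that contains $e$. Writing such a minimizer as $X = Y \cup \lbr e \rbr$ with $Y \subseteq E'$, I get $\si(X) = \si(Y)$ and $r(E - X) = r(E' - Y) = r'(E' - Y)$. The remaining identity $r(E) - r(E - X) = r'(E') - r'(E' - Y)$ reduces to $r(E) = r'(E')$, which is exactly the statement that $e$ is not a coloop of $M$. Granted this, a one-to-one matching between feasible pairs $(X, M)$ and $(Y, M \setminus \lbr e \rbr)$ completes the proof.

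The main obstacle is the coloop case: if $e$ were a coloop with $\si(e) = 0$, then $X = \lbr e \rbr$ already satisfies $r(E) - r(E - X) = 1$ and gives ratio $0$, forcing $S_\si(M) = 0$ even when $S_\si(M \setminus \lbr e \rbr) > 0$. I expect the lemma to be invoked in settings where such elements cannot arise, most naturally when $\si \in \coni(\cB)$, since every coloop belongs to every base and therefore receives strictly positive weight in any nonzero conical combination of the bases. A short separate verification, or an explicit hypothesis that $e$ is not a coloop, should accompany the proof.
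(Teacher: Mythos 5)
Your treatment of the arboricity identity is essentially the paper's own argument: delete $e$ from $X$, the weight is unchanged while the rank can only drop, so every ratio in $M$ is dominated by a ratio in $M\setminus\lbr e \rbr$, and the reverse inequality is trivial; your explicit handling of the degenerate case $r(X-\lbr e \rbr)=0$ is a small refinement the paper skips over. For the strength identity the paper literally writes only ``similarly, we obtain (\ref{eq:S-edge0}),'' whereas you take a genuinely different route: adjoin $e$ to a feasible set (the ratio can only decrease, so the minimum is attained on sets containing $e$) and then match sets $X=Y\cup\lbr e \rbr$ with subsets $Y$ of $E-\lbr e \rbr$. This bijection argument is correct, and its virtue is that it makes visible the one hypothesis the symmetry slogan hides: equating the denominators requires $r(E)=r(E-\lbr e \rbr)$, i.e.\ that $e$ is not a coloop.

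Your coloop caveat is a genuine issue with the statement itself, not a defect of your proof. If $e$ is a coloop of weight zero (take $M=U_{2,2}$ on $\lbr e,f\rbr$ with $\si(e)=0$, $\si(f)=1$), then $X=\lbr e \rbr$ is feasible for the strength problem and gives $S_{\si}(M)=0$, while $S_{\si}(M\setminus\lbr e \rbr)=1$; so (\ref{eq:S-edge0}) fails, and only the inequality $S_{\si}(M)\leq S_{\si}(M\setminus\lbr e \rbr)$ survives, whereas (\ref{eq:D-edge0}) is unaffected. The paper's proof, which verifies only the arboricity chain and appeals to analogy for the strength, silently passes over exactly this case. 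Your closing remark is also the right reconciliation with the intended use: when $\si\in\coni(\cB)$ (or more generally when zero-weight elements are not coloops, as happens in the sparsification setting since every coloop lies in every base), the obstruction disappears, so stating the lemma with an explicit non-coloop hypothesis, as you propose, is the correct fix.
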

\begin{proof}
For $X \subset E$, we have \[\si(X) = \si(X - \lbr e \rbr),\]  and \[r(X) \geq r(X - \lbr e \rbr) = r_{(M\setminus\lbr e \rbr)}(X - \lbr e \rbr).\]
Then, \[ \frac{\si(X)}{r(X)} \leq \frac{\si(X - \lbr e \rbr)}{r(X - \lbr e \rbr)} = \frac{\si(X - \lbr e \rbr)}{r_{(M\setminus\lbr e \rbr)}(X - \lbr e \rbr)}\]
Based on the definitions of $D_{\si}(M)$, we have (\ref{eq:D-edge0}). Similarly, we obtain (\ref{eq:S-edge0}).
\end{proof}
\begin{proposition}\label{increasing}
	Given a matroid $ M(E,\cI) $ with weights $\si \in \R^E_{\geq 0}$. Let $\cB $ be the base family of $M$.   Let $S_{\si}(M)$ be the strength of $M$. Let $D_{\si}(G)$ be the fractional arboricity of $M$. Then, the functions  $\si \mapsto S_{\si}(M)$ and $\si \mapsto D_{\si}(M)$ are Lipschitz continuous and monotonically increasing.
\end{proposition}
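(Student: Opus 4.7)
The plan is to exploit the fact that both $S_\si(M)$ and $D_\si(M)$ are, by definition, the pointwise minimum / maximum of a \emph{finite} family of linear functionals of $\si$ whose index sets do not depend on $\si$. Specifically, let
\[
\cF_S := \{X \subseteq E : r(E) > r(E-X)\}, \qquad \cF_D := \{X \subseteq E : r(X) > 0\},
\]
which depend only on the matroid $M$, not on $\si$. For each $X \in \cF_S$ define $\phi_X(\si) := \si(X)/(r(E)-r(E-X))$, and for each $X \in \cF_D$ define $\psi_X(\si) := \si(X)/r(X)$. Then $S_\si(M) = \min_{X \in \cF_S} \phi_X(\si)$ and $D_\si(M) = \max_{X \in \cF_D} \psi_X(\si)$. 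The strategy is to verify monotonicity and Lipschitz continuity at the level of each $\phi_X$ and $\psi_X$, and then transfer these properties to the min and max.

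For monotonicity, I will simply note that if $\si \le \si'$ componentwise then $\si(X) \le \si'(X)$ for every $X \subseteq E$; dividing by the positive integer denominator (which is independent of $\si$) yields $\phi_X(\si) \le \phi_X(\si')$ and $\psi_X(\si) \le \psi_X(\si')$ for each admissible $X$. Taking the min over $\cF_S$ and the max over $\cF_D$ then gives $S_\si(M) \le S_{\si'}(M)$ and $D_\si(M) \le D_{\si'}(M)$.

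For Lipschitz continuity, the key observation is that since the rank function is integer-valued, the denominators satisfy $r(E)-r(E-X) \ge 1$ for $X \in \cF_S$ and $r(X) \ge 1$ for $X \in \cF_D$. Hence for each such $X$,
\[
|\phi_X(\si) - \phi_X(\si')| \le |\si(X) - \si'(X)| \le \|\si - \si'\|_1,
\]
and similarly for $\psi_X$. Using the standard inequality
\[
\bigl| \min_i f_i(\si) - \min_i f_i(\si') \bigr| \le \max_i |f_i(\si) - f_i(\si')|,
\]
and its analogue for the maximum, I conclude that both $\si \mapsto S_\si(M)$ and $\si \mapsto D_\si(M)$ are Lipschitz with constant at most $1$ in the $\ell^1$ norm.

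There is no substantial obstacle here; the only points to handle carefully are ensuring that the index sets $\cF_S$ and $\cF_D$ are independent of $\si$ (so that the same finite family of linear functionals is taken for every weight vector), and that the denominators are bounded below by $1$ so that the Lipschitz bound is uniform in $X$. Since the statement allows $\si \in \R^E_{\ge 0}$, the formulas defining $\phi_X$ and $\psi_X$ remain well-posed when some weights vanish, so the same argument works verbatim in that setting.
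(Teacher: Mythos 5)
Your proposal is correct and follows essentially the same route as the paper: both arguments exploit that $S_\si(M)$ and $D_\si(M)$ are a finite min/max of linear functionals $\si \mapsto \si(X)/(\text{integer} \ge 1)$ over index sets independent of $\si$, giving monotonicity immediately and Lipschitz continuity by comparing values at an optimizer (your generic min/max inequality is just an abstract packaging of the paper's evaluation at the optimal $X_1$, and your constant $1$ in the $\ell^1$ norm matches the paper's $|E|$ in the $\ell^\infty$ norm).
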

\begin{proof}
	Let $ \si_1, \si_2\in \R^E_{>0}$, and let $X_1$ and $X_2$ be  optimal for $S_{\si_1}(M)$ and $S_{\si_2}(M)$ respectively. Then, without loss of generality, we assume that $S_{\si_1}(M) \leq S_{\si_2}(M)$. So,
	
	\begin{align*}
		S_{\si_2}(M) - S_{\si_1}(M) &\leq  \frac{\si_2(X_1)}{r(E) - r(E - X_1)} - \frac{\si_1(X_1)}{r(E) - r(E - X_1)} \\
		& = \frac{1}{r(E) - r(E - X_1)}\sum\limits_{e \in X_1} (\si_2(e)-\si_1(e)) \\
		&   \leq  \frac{1}{r(E) - r(E - X_1)} \sum\limits_{e \in X_1} |\si_2(e)-\si_1(e)|\\
		&\leq \sum\limits_{e \in X_1} |\si_2(e)-\si_1(e)| \leq \sum\limits_{e \in E} |\si_2(e)-\si_1(e)| \\
		&\leq |E| \Vert \si_2-\si_1 \Vert_{\infty}.
	\end{align*}
	where $\Vert\cdot\Vert_{\infty}$ is the maximum norm on $\R^E$. So,  the function $\si \mapsto S_{\si}(G)$ is Lipschitz continuous. We have a similar argument for $D_{\si}(G).$
	
	For the monotonicity, assume that $\si \leq \si'$, then $\si(A) \leq \si'(A)$ for all $A \subset E.$ Therefore $ S_{\si}(G) \leq S_{\si'}(G)$ and  $ D_{\si}(G) \leq D_{\si'}(G)$.
\end{proof}

\subsection{Matroid reinforcement}\label{sec:rein-mat}

Given a matroid $ M(E,\cI) $ with weights $\si \in \R^E_{>0}$. Let $f$ be the rank function of $M$.  Let $\alpha$ be defined as in (\ref{eq:alpha}).
 By Theorem \ref{thm:rein-main}, the problem (\ref{eq:rein-poly}) is equivalent to (\ref{eq:rein-poly-reduced}).
By Remark \ref{ehandps}, this is equivalent to 
\begin{equation}\label{pbasic2}
	\min\left\{m \cdot z : z \text{ is maxmimal in }  (\alpha P_f)^{\si} \right\}.
\end{equation}
Since $(\alpha P_f)_{\si}$ is a polymatroid, we can use the greedy algorithm to solve this problem. To implement the greedy algorithm, we have that the oracle
\begin{equation}\label{oracle3}
	\max \left\{ \epsilon : z+\epsilon\ones_{\left\{ j\right\}} \in (\alpha P_f)^{\si} \right\},
\end{equation}
is equivalent to 
\begin{equation}\label{oracle4}
	\max \left\{ \epsilon : \si+z+\epsilon\ones_{\left\{ j\right\}} \in \alpha P_f \right\}.
\end{equation}
Note that the oracle (\ref{oracle4}) is equivalent to 

\begin{equation}\label{oracle2}
	\min \left\{ \alpha f(A) - (\si +z)(A): j \in A \subset E \right\}.
\end{equation}
Assume that we have a method to implement the oracle (\ref{oracle2}). Suppose $m_{j_1} \leq m_{j_2} \leq \dots \leq m_{j_k}$ where $k = |E|$. We introduce  Algorithm \ref{al:hmt-in} for the matroid reinforcement problem (\ref{eq:hmt1}).
\begin{algorithm}
	\caption{Algorithm for the matroid reinforcement problem}\label{al:hmt-in}
	\hspace*{\algorithmicindent} \textbf{Input:}  $M= (E,\cI),\si,\alpha$
	
	\hspace*{\algorithmicindent} \textbf{Output:} Optimal  $z$
	\begin{algorithmic}[1] 
		\STATE $z \leftarrow 0$
		\FOR {$i \in \left\{ 1,2,\dots,k \right\}$}
		\STATE $z(j_i) \leftarrow  z(j_i) + \min \left\{\alpha f(A) - (\si+z)(A): j_i \in A \subset E \right\}$
		\ENDFOR
		\RETURN z
	\end{algorithmic}
\end{algorithm}

\begin{theorem}
	 Let $m \in \R^E_{\geq 0}$ be the cost of increasing elements per unit. Let $D_{\si}(M)$ be the fractional arboricity and $\alpha = D_{\si}(M)$. Then, after each iteration of step (3) in Algorithm \ref{al:hmt-in}, the matroid $M$ with the new weights $\si+z$ has the fractional arboricity remaining unchanged. When the algorithm terminates and outputs the optimal $z$, the matroid $M=(E,\cI)$ with weights $(\si+z)$ is homogeneous.
\end{theorem}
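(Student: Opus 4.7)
The plan is to maintain the invariant $\si + z \in \alpha P_f$ throughout Algorithm \ref{al:hmt-in}, use it to give an upper bound $D_{\si+z}(M) \leq \alpha$ at every step, and combine this with the monotonicity of fractional arboricity (Proposition \ref{increasing}) to conclude equality. The terminal claim will then follow by recognizing the output as an element of $\alpha B_f$ and invoking Theorem \ref{thm:hom-coni}.

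For initialization, I would note that when $f$ is the rank function of $M$, the definition $\alpha = \min\{h>0 : \si \in hP_f\}$ is just $\alpha = D_\si(M)$ (this identification is made in the paragraph preceding Theorem \ref{thm:hab}), so the invariant holds trivially at the start. For the inductive step at iteration $i$, the oracle value $\ep_i = \min\{\alpha f(A) - (\si+z)(A) : j_i \in A \subseteq E\}$ is non-negative because $\si+z \in \alpha P_f$ by the inductive hypothesis. After the update $z(j_i) \leftarrow z(j_i) + \ep_i$, subsets $A$ not containing $j_i$ see no change in $(\si+z)(A)$, while subsets $A \ni j_i$ see $(\si+z)(A)$ grow by at most $\alpha f(A) - (\si+z)(A)$. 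Thus $\si+z \in \alpha P_f$ is preserved and $D_{\si+z}(M) \leq \alpha$. Since $z$ is componentwise non-decreasing across iterations, Proposition \ref{increasing} gives $D_{\si+z}(M) \geq D_\si(M) = \alpha$, yielding equality at every step.

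For the termination claim, I would invoke the discussion preceding Algorithm \ref{al:hmt-in}: the oracle (\ref{oracle2}) computes the polymatroid oracle (\ref{oracle3}) for $(\alpha P_f)^\si$, which is a polymatroid by Lemma \ref{pspoly}. Standard greedy-for-polymatroids results (Cunningham, Edmonds) then guarantee that Algorithm \ref{al:hmt-in} outputs a minimum-cost maximal element $z$ of $(\alpha P_f)^\si$, which by Theorem \ref{thm:rein-main} and Remark \ref{ehandps} is optimal for the matroid reinforcement problem. Lemma \ref{maximalps} translates maximality into $(\si+z)(E) = \alpha f(E)$, so $\si + z \in \alpha B_f \subseteq \coni(\cB)$, and Theorem \ref{thm:hom-coni} delivers homogeneity of $M$ with weights $\si+z$.

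The main obstacle is essentially a framing one: recognizing that the greedy procedure is finding a maximal element of the derived polymatroid $(\alpha P_f)^\si$ rather than of $\alpha P_f$ itself, and therefore Lemma \ref{pspoly} (that $(\alpha P_f)^\si$ is a polymatroid) is the key structural input that licenses the application of the classical greedy algorithm. Once this identification is made, both the invariance argument and the homogeneity conclusion reduce to a direct combination of Lemma \ref{maximalps}, Proposition \ref{increasing}, and Theorem \ref{thm:hom-coni}.
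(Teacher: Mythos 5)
Your proposal is correct and follows essentially the same route as the paper's proof: the invariant $\si+z\in\alpha P_f$ bounds the fractional arboricity above by $\alpha$, Proposition \ref{increasing} bounds it below, and maximality of $\si+z$ at termination places it in $\alpha B_f=\alpha\co(\cB)\subseteq\coni(\cB)$, giving homogeneity via Theorem \ref{thm:hom-coni}. The only difference is that you spell out the preservation of the invariant under the oracle update and the greedy/Lemma \ref{maximalps} bookkeeping, which the paper leaves implicit.
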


\begin{proof}
	After each iteration of step (3) of Algorithm \ref{al:hmt-in}, the weights are increasing. Then, by Proposition \ref{increasing}, the fractional arboricity is increasing as well. In contrast, since the new weights are always in $\alpha P_f$, the fractional arboricity never exceeds $\alpha $ during the process by Theorem \ref{thm:hab}. Therefore, the fractional arboricity remains unchanged during the entire algorithm. When the algorithm terminates, $\si+z$ is maximal in $\alpha P_f$, this means that $\si+z \in \alpha B_f =\alpha \co(\cB)$. Therefore, the matroid $M=(E,\cI)$ with weights $(\si+z)$, with the optimal $z$, is homogeneous.
\end{proof}

\subsection{Matroid sparsification}\label{sec:spar-mat}

Given a matroid $ M(E,\cI) $ with weights $\si \in \R^E_{>0}$ and the rank function $f$.  Let $\beta$ be defined as in (\ref{eq:beta}).
Follow the same method as in Section \ref{sec:rein-mat}, we obtain Algorithm \ref{al:hmt-de} for the matroid sparsification problem (\ref{eq:hmt2}).

\begin{algorithm}
	\caption{Algorithm for the matroid sparsification problem}\label{al:hmt-de}
	\hspace*{\algorithmicindent} \textbf{Input:}  $M= (E,\cI),\si,\beta$
	
	\hspace*{\algorithmicindent} \textbf{Output:} Optimal  $z$
	\begin{algorithmic}[1] 
		\STATE $z \leftarrow 0$
		\FOR {$i \in \left\{ 1,2,\dots,k \right\}$}
		\STATE $z(j_i) \leftarrow  z(j_i) + \min \left\{ (\si -z)(A) - \beta g(A) : j \in A \subset E \right\}$
		\ENDFOR
		\RETURN z
	\end{algorithmic}
\end{algorithm}

\begin{theorem}
	Let $m \in \R^E_{\geq 0}$ be the cost of decreasing elements per unit. Let $S_{\si}(M)$ be the strength and $\beta = S_{\si}(M)$. Then, after each iteration of step (3) in Algorithm \ref{al:hmt-de}, the matroid with the new weights $\si-z$ has the strength remaining unchanged. When the algorithm terminates and outputs the optimal $z$, the matroid $M=(E,\cI)$ with weights $(\si-z)$ is homogeneous.
\end{theorem}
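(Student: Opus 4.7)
The plan is to mirror the argument given for the reinforcement algorithm, making the two natural changes: replace increase by decrease, replace fractional arboricity by strength, and replace the polymatroid $P_f$ by the contrapolymatroid $Q_g$ (using Theorem \ref{thm:translation} to borrow the polymatroid machinery). First I would check the loop invariant that $\si - z \in \beta Q_g$ is maintained throughout Algorithm \ref{al:hmt-de}. At the start, $z = 0$ so $\si - z = \si \in \beta Q_g$ by the definition of $\beta$ in (\ref{eq:beta}). Each iteration of step (3) chooses the largest $\epsilon \geq 0$ such that $(\si - z - \epsilon \ones_{\{j_i\}})(A) \geq \beta g(A)$ for all $A \subseteq E$ containing $j_i$ (and the constraints for $A$ not containing $j_i$ are unaffected). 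Hence the invariant persists after each update.

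Next I would use monotonicity. Since $z$ only grows, $\si - z$ only decreases, and by Proposition \ref{increasing} the strength $S_{\si-z}(M)$ can only decrease. On the other hand, the invariant $\si - z \in \beta Q_g$ combined with Theorem \ref{thm:hab} (ii) (applied to the current weight vector $\si - z$ in place of $s$) forces $\beta \leq S_{\si-z}(M)$. Because the initial strength is exactly $S_\si(M) = \beta$, the two inequalities give $S_{\si - z}(M) = \beta$ after every iteration of step (3).

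Finally, for termination, I would argue that when the greedy loop exits, $z$ is maximal in the translated polymatroid $(\beta Q_g)^\si := \lbr z \in \R^E_{\geq 0} : \si - z \in \beta Q_g \rbr$ (which, after applying the translation $t$ of Theorem \ref{thm:translation}, is the honest polymatroid we run the greedy algorithm on, so standard greedy correctness applies). By the contrapolymatroid version of Lemma \ref{maximalps} listed as a lemma in Section \ref{sec:spar-poly}, maximality of $z$ in $(\beta Q_g)^\si$ is equivalent to $(\si - z)(E) = \beta g(E) = \beta f(E)$. Combined with $\si - z \in \beta Q_g$, this yields $\si - z \in \beta C_g = \beta B_f$ by Proposition \ref{thm:contrapoly-prop}, hence $\si - z \in \coni(\cB)$. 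By Theorem \ref{thm:hom-coni}, the matroid $M$ with weights $\si - z$ is homogeneous.

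The main obstacle I anticipate is purely bookkeeping: checking that the greedy update in step (3) really corresponds to choosing the largest admissible $\epsilon$ in the translated polymatroid picture, so that greedy correctness and the polymatroid maximality lemma apply verbatim. Once this translation is verified using Theorem \ref{thm:translation} (which turns the contrapolymatroid $Q_g$, intersected with the box $\{x \leq \mathbf c\}$, into an actual polymatroid), the remainder is exactly the same sandwich argument as in the reinforcement case.
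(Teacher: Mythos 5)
Your argument is correct and is essentially the paper's own proof: the same sandwich (weights decrease so strength can only decrease by Proposition \ref{increasing}, while the invariant $\si-z\in\beta Q_g$ keeps it at least $\beta$ by Theorem \ref{thm:hab}), followed by the same terminal step that minimality of $\si-z$ in $\beta Q_g$ gives $\si-z\in\beta C_g=\beta B_f\subset\coni(\cB)$, hence homogeneity via Theorem \ref{thm:hom-coni}. The only nit is a citation slip: the identity $B_f=C_g$ comes from the argument in Theorem \ref{thm:hab} (and is classical), not from Proposition \ref{thm:contrapoly-prop}(i).
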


\begin{proof}
	After each iteration of step (3) of Algorithm \ref{al:hmt-de}, the weights are decreasing. Then, by Proposition \ref{increasing}, the strength is decreasing as well. In contrast, since the new weights are always in $\beta Q_g$, the strength never go below $\beta$ during the process by Theorem \ref{thm:hab}. Therefore, the strength remains unchanged during the entire algorithm. When the algorithm terminates, $\si-z$ is maximal in $\beta Q_g$, this means that $\si-z \in \beta C_g =\beta \co(\cB)$. Therefore, the matroid $M=(E,\cI)$ with weights $(\si-z)$, with the optimal $z$, is homogeneous.
\end{proof}

\section{Application for graphs}\label{sec:app-graph}

In this section, we consider the matroid reinforcement and matroid sparsification problems for graphic matroids.

\subsection{An algorithm for computing  fractional arboricity}\label{sec:app-d}
We consider an undirected, connected, and weighted graph $G= (V,E,\si)$ with edge weights $\si \in \R^E_{>0}$. Let $f$ be the rank function of the graphic matroid associated with $G$.
 Let $\alpha$ be defined as in (\ref{eq:alpha}), we recall that $\alpha = D_{\si}(G)$ where the fractional arboricity of $G$ is defined as:
 \begin{equation}
 	D_{\si}(G) := \max \left\{ \frac{\si(X)}{f(X)} : X \subseteq E, r(X)>0 \right\}.
 \end{equation}
 
\begin{theorem}\label{thm:def-D}
	 Let $G =(V,E,\si)$ be a weighted connected graph. Let $D_{\si}(G)$ be the fractional arboricity of $G$. Let $\cH$ be the set of all vertex-induced connected subgraphs of $G$ that contain at least one edge. Let $E_B$ denote the set of edges that have both endpoints in each set $B \subseteq V$. Then
	\begin{equation}\label{eq:def-D}
		D_{\si}(G) = \max \left\{ \frac{\si(E_B)}{|B|-1}:B \subseteq V, |B| \geq 2 \right\} = \max\limits_{H \in \cH} \frac{\si(E_{H	})}{|V_H|-1}.
	\end{equation}

\end{theorem}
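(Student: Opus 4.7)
The plan is to establish the circular chain
\[
D_{\si}(G) \;\leq\; \max_{H \in \cH} \frac{\si(E_H)}{|V_H|-1} \;\leq\; \max\left\{\frac{\si(E_B)}{|B|-1}: B \subseteq V, |B|\geq 2\right\} \;\leq\; D_{\si}(G),
\]
which forces all three quantities to coincide. The middle inequality is immediate, since every $H \in \cH$ is a connected vertex-induced subgraph, and so $V_H$ is an admissible choice of $B$ with $\si(E_H)/(|V_H|-1)$ appearing in the larger max.

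For the rightmost inequality, I would fix $B \subseteq V$ with $|B| \geq 2$; if $E_B = \emptyset$, the ratio vanishes, so assume otherwise. Using the rank formula $f(E_B) = |V(E_B)| - r_{E_B}$ together with $V(E_B) \subseteq B$ and $r_{E_B} \geq 1$, I get $f(E_B) \leq |B|-1$, whence
\[
\frac{\si(E_B)}{|B|-1} \;\leq\; \frac{\si(E_B)}{f(E_B)} \;\leq\; D_{\si}(G),
\]
and taking the max over $B$ finishes this direction.

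For the leftmost inequality, I would take an arbitrary $X \subseteq E$ with $f(X) > 0$ and decompose the edge-induced subgraph $H_X$ into its connected components, producing vertex sets $V_1, \dots, V_k$ and edge sets $E_1, \dots, E_k$. Because the matroid is loopless and $H_X$ has no isolated vertices by construction, each $|V_i| \geq 2$ and $f(E_i) = |V_i|-1$, so that $f(X) = \sum_i (|V_i|-1)$ and $\si(X) = \sum_i \si(E_i)$. Applying the mediant inequality,
\[
\frac{\si(X)}{f(X)} \;=\; \frac{\sum_{i} \si(E_i)}{\sum_{i} (|V_i|-1)} \;\leq\; \max_i \frac{\si(E_i)}{|V_i|-1}.
\]
Now $V_i$ is the vertex set of a connected component of $H_X$, hence of a connected subgraph of $G$, so the vertex-induced graph $G[V_i]$ lies in $\cH$ and its edge set $E_{V_i}$ contains $E_i$. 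Replacing $E_i$ with $E_{V_i}$ weakly increases the numerator and preserves the denominator, yielding $\si(X)/f(X) \leq \max_{H\in \cH} \si(E_H)/(|V_H|-1)$. Taking the supremum over $X$ closes the chain.

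The argument is largely a matter of bookkeeping; the only points requiring care are the handling of the case $E_B = \emptyset$, the observation that edge-induced subgraphs of a loopless matroid have no isolated vertices (so the mediant inequality can be applied with all denominators positive), and the fact that $G[V_i]$ is connected because it contains the spanning connected subgraph $(V_i, E_i)$ of the component. I do not anticipate any substantive obstacle beyond these routine verifications.
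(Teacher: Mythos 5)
Your proposal is correct, and its core is the same as the paper's: the essential step in both is to decompose the edge-induced subgraph of an arbitrary $X$ into connected components and apply the mediant inequality, identifying the rank of each connected piece with its vertex count minus one. The only organizational difference is how the vertex-set maximum is tied in: the paper proves four inequalities with the induced-subgraph maximum as the hub ($D_3\le D_1$, $D_1\le D_3$, $D_3\le D_2$, $D_2\le D_3$), the last of which requires a second component decomposition of $(B,E_B)$ and quietly assumes its components all contain an edge, whereas you close a single cyclic chain $D_1\le D_3\le D_2\le D_1$ and handle the vertex-set maximum by the one-line rank bound $f(E_B)\le |B|-1$. This buys you a slightly leaner argument that avoids the second decomposition and the isolated-vertex caveat; the paper's version, in exchange, exhibits explicitly how an optimizer of the vertex-set formulation can be refined to a connected induced subgraph, which is the form used later in Algorithm \ref{al0}. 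Either way the substance is the same, and your verifications (the $E_B=\emptyset$ case, looplessness giving $|V_i|\ge 2$, and connectivity of $G[V_i]$) are exactly the right points to check.
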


\begin{proof}
We have three definitions for fractional arboricity: 
\[
D_1 := D_{\si}(G),\qquad D_2 :=  \max \left\{ \frac{\si(E_B)}{|B|-1}:B \subseteq V, |B| \geq 2 \right\},\qquad D_3  := \max\limits_{H \in \cH} \frac{\si(E_{H	})}{|V_H|-1}.
\]

	Let $H' =(V_{H'},E_{H'})$ be a subgraph which optimizes $D_3$. Choose $X = E_{H'}$, then $f(X) =|V_{H'}|-1$. Thus,  we obtain $D_3 \leq D_1$. Choose $B = V_{H'}$, then $D_3 \leq D_2$.
	
Let $X$ be an optimizer for $D_1$. Then $H = (V_X,X)$ be the edge-induced subgraph generated by $X$.
Suppose $H$ has $m$ connected components $H_1,H_2,\dots, H_m$. We want to show that \[\frac{\si(X)}{f(X)} \leq \max\limits_{i = 1,\dots,m} \frac{\si(E_{H_i})}{|V_{H_i}| -1}.\] 
Notice  that for $a_1, \dots,a_n >0$ and $b_1,\dots,b_n>0$, we have
\[
\sum_{i=1}^n a_i \le \left(\max\limits_{i =1,\dots,n} \frac{a_i}{b_i}\right)\sum_{i=1}^n b_i. 
\]
Therefore,
\[\frac{a_1+a_2+\dots + a_n}{b_1+b_2+\dots + b_n} \leq \max\limits_{i =1,\dots,n} \frac{a_i}{b_i}.\]
In particular, 
\begin{align}\label{eq:dd3}
	\frac{\si(X)}{f(X)}  &\leq \frac{\si(E_{H_1})+\dots+\si(E_{H_m})}{(|V_{H_1}|-1)+\dots+(|V_{H_m}|-1)} & \notag \\     
	&\leq \max\limits_{i =1,\dots,m} \frac{\si(E_{H_i})}{|V_{H_i}|-1}. & \notag 
\end{align}
Thus, $D_1 \leq D_3$.

Let $B$ be an optimizer for $D_2$. Then $K = (B,E_B)$ be the vertex-induced subgraph generated by $B$.
Suppose $K$ has $m$ connected components $K_1,K_2,\dots, K_m$ where each component has at least one edge. We want to show that \[\frac{\si(E_B)}{|B| -1} \leq \max\limits_{i = 1,\dots,m} \frac{\si(E_{K_i})}{|V_{K_i}| -1}.\] 
	We have, 
	\begin{align}\label{eq:dd3}
		\frac{\si(E_B)}{|B| -1}  &\leq \frac{\si(E_{K_1})+\dots+\si(E_{K_m})}{|V_{K_1}|+\dots+|V_{K_m}|-1} & \notag \\     
		&\leq \frac{\si(E_{K_1})+\dots+\si(E_{K_m})}{|V_{H_1}|+\dots+|V_{K_m}|-m}  & \notag \\ 
		&\leq \max\limits_{i =1,\dots,m} \frac{\si(E_{K_i})}{|V_{K_i}|-1}. & \notag 
	\end{align}
	Thus, $D_2 \leq D_3$.
\end{proof}
 It is well-known that the polytope $P_f$ defined as in (\ref{eq:polymatroid}) can be described as follows:
	\begin{equation}\label{poly4}
		\begin{array}{ll}
			x(E_B) \leq |B|-1\qquad \forall B \text{ such that } \emptyset \neq B \subseteq V; \\
			x \geq 0.
		\end{array}
	\end{equation}
	where $E_B$ denotes the set of edges that have both endpoints in $B$. This fact gives rise to  a second proof for the first equality in (\ref{eq:def-D}).

Next, we give an algorithm for computing the fractional arboricity $D_{\si}(G)$. There is a well-known method for dealing with quotients like $D_{\si}(G)$. It is described as follows.
Recall that $\alpha = D_{\si}(G)$ and let $b>0$. By (\ref{eq:def-D}), we have

\begin{align}
	\alpha  <  b &   \Leftrightarrow  \max \left\{ \frac{\si(E_B)}{|B|-1}:B \subseteq V, |B| \geq 2 \right\}  < b \notag \\ 
	&   \Leftrightarrow   \frac{\si(E_B)}{|B|-1} < b \qquad \forall B \text{ such that } B \subseteq V, |B|\geq 2 \notag\\
	& \Leftrightarrow    b(|B|-1) - \si(E_B) > 0 \qquad \forall B \text{ such that } B \subseteq V, |B|\geq 2 \notag\\
	& \Leftrightarrow   g(b):=  \min \left\{b(|B|-1) - \si(E_B): B \subseteq V,|B|\geq 2\right\} > 0.
\end{align}
Note that we also have similar statements:
\begin{equation}\label{eq:kb-eq-gb-2}
	\alpha = b \Leftrightarrow  g(b) = 0,
\end{equation}
\begin{equation}\label{eq:kb-eq-gb-3}
	\alpha > b \Leftrightarrow  g(b) < 0.
\end{equation} 

Next, we assume that we have a lower bound $b$ of $\alpha$, in other words, $\alpha \geq b$. Then, we have $g(b) \leq 0$, let's consider two cases:
\begin{itemize}
	\item If $g(b) = 0$, then $\alpha =  b$.
	\item If  $g(b) < 0$. Let  $B^{*}$ be a minimizer of $g(b)$, then  
	\begin{align*}
		&b(|B^{*}|-1) - \si(E_{B^{*}}) <0 \\
		&\Leftrightarrow  b':= \frac{\si(E_{B^{*}})}{|B^{*}|-1} >b.
	\end{align*}
\end{itemize}
Then, $b'$ is a better lower bound of $\alpha$. Based on this argument, we obtain an algorithm for computing $\alpha$:

\begin{algorithm}
	\caption{Algorithm for computing $\alpha$}\label{al0}
	\hspace*{\algorithmicindent} \textbf{Input:}  $G= (V,E,\si)$
	
	\hspace*{\algorithmicindent} \textbf{Output:} $\alpha$
	\begin{algorithmic}[1] 
		\STATE $B \leftarrow V$
		\STATE $b \leftarrow \si(E)/(|V|-1)$
		\WHILE {$ g(b) < 0$}
		\STATE $B \leftarrow$ a minimizer of $g(b)$
		\STATE $b \leftarrow \si(E_B)/(|B|-1)$
		\ENDWHILE
		\RETURN b, B
	\end{algorithmic}
\end{algorithm}

The following lemma shows that Algorithm  \ref{al0} takes at most $n=|V|$ steps.

\begin{lemma}\label{convergence}
	Assume that $g(b_0)<0$ and  $B_0$ is a minimizer of $g(b_0)$. Denote \[b_1:= \frac{\si(E_{B_0})}{|B_0|-1}.\] Let $B_1$ be a minimizer of $g(b_1)$. If $g(b_1)<0$, then  $|B_1| < |B_0|$.
\end{lemma}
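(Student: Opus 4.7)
The plan is to exploit two facts together: that $B_0$ minimizes $g(b_0)$ (giving an upper bound on $\si(E_{B_1})$), and that $b_1$ is defined so that the expression $b(|B_0|-1) - \si(E_{B_0})$ vanishes at $b=b_1$. Combining these with the hypothesis $g(b_1) < 0$ should squeeze the quantity $(b_1 - b_0)(|B_1| - |B_0|)$ to be strictly negative, and since $b_1 > b_0$ this will force $|B_1| < |B_0|$.

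First I would verify $b_1 > b_0$. The assumption $g(b_0) < 0$ means $b_0(|B_0|-1) - \si(E_{B_0}) < 0$, so $\si(E_{B_0}) > b_0(|B_0|-1)$, and dividing by the positive quantity $|B_0|-1$ yields $b_1 > b_0$. Next, I would exploit minimality of $B_0$ for $g(b_0)$: evaluating at $B_1$ gives
\[
b_0(|B_0|-1) - \si(E_{B_0}) \leq b_0(|B_1|-1) - \si(E_{B_1}),
\]
which rearranges to $\si(E_{B_1}) \leq \si(E_{B_0}) + b_0(|B_1| - |B_0|)$. Using the defining identity $\si(E_{B_0}) = b_1(|B_0|-1)$, this becomes
\[
\si(E_{B_1}) \leq b_1(|B_0|-1) + b_0(|B_1| - |B_0|).
\]

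On the other hand, $g(b_1) < 0$ with $B_1$ a minimizer means $\si(E_{B_1}) > b_1(|B_1|-1)$. Chaining this with the previous upper bound yields
\[
b_1(|B_1|-1) < b_1(|B_0|-1) + b_0(|B_1| - |B_0|),
\]
which simplifies to $(b_1 - b_0)(|B_1| - |B_0|) < 0$. Since $b_1 - b_0 > 0$ from the first step, we conclude $|B_1| < |B_0|$.

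There is no substantive obstacle; the argument is a short algebraic manipulation. The only subtlety is keeping track of which inequalities are strict: the strict inequality in the final conclusion comes from the strict inequality $g(b_1) < 0$, while the weak inequality from minimality of $B_0$ is enough to produce the chain. The key insight driving the proof is the substitution $\si(E_{B_0}) = b_1(|B_0|-1)$, which links the two minimization conditions at different levels.
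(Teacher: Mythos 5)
Your proof is correct and follows essentially the same route as the paper's: both rest on evaluating the minimality of $B_0$ at $B_1$, the identity $\si(E_{B_0}) = b_1(|B_0|-1)$, and the strict inequality $g(b_1)<0$, which together yield $(b_1-b_0)(|B_1|-|B_0|)<0$ and hence $|B_1|<|B_0|$. The only cosmetic difference is that you isolate an upper bound on $\si(E_{B_1})$ before chaining, whereas the paper runs the same algebra as a single chain of inequalities starting from $0 < \si(E_{B_1}) - b_1(|B_1|-1)$.
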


\begin{proof}
	Because $g(b_0)<0$ and $g(b_1)<0$, we have
	\begin{equation*}
		\frac{\si(E_{B_1})}{|B_1|-1} >  b_1= \frac{\si(E_{B_0})}{|B_0|-1} > b_0.
	\end{equation*}
	Then,
	\begin{align*}
		0 &< \si(E_{B_1}) - b_1(|B_1|-1) &\\
		& =  \si(E_{B_1}) - b_0(|B_1|-1)+ b_0(|B_1|-1)  - b_1(|B_1|-1) &\\
		& \leq  \si(E_{B_0}) - b_0(|B_0|-1)+ b_0(|B_1|-1)  - b_1(|B_1|-1) & (\text{because } B_0 \text{ is a minimizer of } g(b_0))\\
		& = b_1(|B_0|-1) - b_0(|B_0|-1)+ b_0(|B_1|-1)  - b_1(|B_1|-1)&\\
		& = (b_0-b_1)(|B_1|-|B_0|).
	\end{align*}
	Since  $ b_0-b_1 <0$, we obtain $|B_1| < |B_0|$.
\end{proof}
By Lemma \ref{convergence}, we conclude that Algorithm \ref{al0} will compute $D_{\si}(G)$ within at most $|V|$ iterations. The remaining work is to find a way to compute  $g(b)$ given that $g(b) \leq 0$. Denote $x := \si/b$, then finding $g(b)$ is equivalent to finding
\begin{equation}\label{eq:g(b)}
	\min \left\{(|B|-1) - x(E_B): B \subseteq V,|B|\geq 2\right\}.
\end{equation}

Let's recall the problem of finding a most-violated inequality for the description (\ref{poly4}) of $P_f$:
\begin{equation}\label{eq:most-violated}
	\min \left\{(|B|-1) - x(E_B): \emptyset \neq B \subseteq V \right\}.
\end{equation}
In general, the problem (\ref{eq:g(b)}) and the problem (\ref{eq:most-violated}) are not equivalent. But note that we have $g(b) \leq 0$, and when $|B| = 1$ we have $|B|-1 - x(E_B) = 0$. Therefore, in our case, these two problems are equivalent. A network flow algorithm for this problem is  known \cite{cunninghamminimal}. 

\subsubsection*{A Cunningham's minimum cut formulation.}
In \cite{cunninghamminimal}, Cunnningham gives an algorithm for finding a most-violated inequality for the description (\ref{poly4}). We recall the algorithm for completeness.

Given $x \in \R^E_{>0}$. As described in \cite{cunninghamminimal}, we construct a capacitated undirected graph $G'$ from $G = (V,E)$ as follows.
\bi
\item The vertex set of $G'$ is $V \cup \left\{ r,s\right\}$.
\item Every edge $e$ of $G$ is an edge of $G'$ with the same endpoints and it has capacity $\frac{1}{2}x(e)$.
\item  For every $v \in V$, there is an edge connecting $v$ and $s$ and it has capacity $1$.
\item For every $v \in V$, let $\delta(v) := \left\{ e\in E: e \text{ is incident with } v  \right\}$, there is an edge connecting $v$ and $r$ and it has capacity $\frac{1}{2}x(\delta(v))$.
\ei
Denote the capacity of edges in $G'$ by $w$ and $B = V \setminus A$. Consider a cut separating $r$ and $s$ in $G'$ determined by 

\[ A \cup \left\{ r\right\} \text{ for any } A \subset V.\]
Then, the value of this cut is:
\begin{align*}
	& \sum\limits_{e =\left\{ a,b \right\}:a \in A, b \in B} w(e) + \sum\limits_{e =\left\{ r,b \right\}: b \in B} w(e)+\sum\limits_{e =\left\{ a,s \right\}:a \in A} w(e) \\ 
	& = \sum\limits_{e =\left\{ a,b \right\}:a \in A, b \in B} \frac{1}{2}x(e) +  \sum\limits_{b \in B} \frac{1}{2}x(\delta(b)) + |A|\\
	& = x(E)-x(E_A)+|A|.
\end{align*}
where  $E_A$ is the set of edges that have both endpoints in $A$.
Therefore, the problem (\ref{eq:most-violated}) is equivalent to find a minimum cut  $A \cup \left\{ r\right\}$  satisfying $A \neq \emptyset$. To overcome the condition $A \neq \emptyset$, we just need to solve $|V|$ minimum cut
problems, each one determined by changing the capacity of an edge connecting $v$ and $r$ to $\infty$. This is because any cut  $A \cup \left\{ r\right\}$ with $A  \neq \emptyset$ will not use at least one of these edges.

Run-time complexity for Algorithm \ref{al0}: $|V|^2$ minimum-cut calculations.  This is because the problem of finding a most-violated inequality for the description (\ref{poly4}) of $P_f$ takes  $|V|$ minimum-cut calculations and the number of iterations is at most $|V|$.

\subsection{Matroid reinforcement for graphs}\label{sec:app-rein}
In this section, we give a detailed method to implement Algorithm \ref{al:hmt-in}.
\subsubsection*{Cunningham's minimum cut formulation modification}
In \cite{cunninghamoptimal}, Cunningham gives a network-flow algorithm that solves (\ref{oracle2}). Given $x \in P_f$ and $j \in E$, we want to solve
\begin{equation}\label{eq:ora-10}
	\min \left\{ f(A) - x(A): j \in A \subset E \right\}.
\end{equation}
Because we have that  $x \in P_f$, then a set $A$ that minimizes $f(A) - x(A)$ over $j \in A \subseteq E $ can be required to be of the form $E_B$. Hence, Cunningham modified his minimum cut formulation  as follows.

We construct an undirected capacitated graph $G'$ from $G=(V,E)$ as follows.
\bi 
\item The vertex set for $G'$ is $V \cap \left\{ r,s\right\}$, where $r$ and $s$ are new vertices that will be the source and sink respectively.
\item Every edge $e$ of $G$ is an edge of $G'$ with the same endpoints and it has capacity $\frac{1}{2}x(e)$.
\item  For every $v \in V$, there is an edge connecting $v$ and $s$ and it has capacity $1$.
\item For every $v \in V$, let $\delta(v) := \left\{ e\in E: e \text{ is incident with } v  \right\}$, there is an edge connecting $v$ and $r$. It has capacity $\frac{1}{2}x(\delta(v))$ if $v$ is not an endpoint of $j$ and has capacity $\infty$ if $v$ is an endpoint of $j$.
\ei
As shown in \cite{cunninghamoptimal}, (\ref{eq:ora-10}) can be recovered from the value of  a minimum cut seperating $r$ and $s$ in $G'$ and the edges of a minimum cut (after removing any edges incident with $r$ or $s$) form a minimizer for (\ref{eq:ora-10}).

Run-time complexity for Algorithm \ref{al:hmt-in}: $|V|^2 + |E|$ minimum-cut calculations. This is due to finding $D_{\si}(G)$ and the number of iterations is $|E|$ where each iteration takes 1 minimum-cut calculation.

\subsection{Matroid sparsification for graphs}\label{sec:app-spar}
In this section, we provide a detailed method to implement Algorithm \ref{al:hmt-de}.
Given $x \in Q_g$ and $j \in E$, we want to solve
\begin{equation*}
	\min \left\{ x(A) - g(A): j \in A \subset E \right\}.
\end{equation*}
This is equivalent to
\begin{equation}\label{eq:spargraph0}
	\min \left\{ x(A) + f(E-A): j \in A \subset E \right\}.
\end{equation}
By a change of variable $B:= E-A$, the problem (\ref{eq:spargraph0}) is equivalent to \begin{equation}\label{eq:spargraph}
	\min \left\{  f(B) -x(B): j \notin B \subset E \right\}.
\end{equation}
We can deal with this problem by deleting the edge $j$ from $G$. Let $G' = (V_{G'},E_{G'}) := G - \lbr j\rbr$, then (\ref{eq:spargraph}) is equivalent to
\begin{equation}
	\min \left\{  f_{G'}(B) -x(B): B \subset E_{G'} \right\}.
\end{equation}
In fact, this is known as the attack problem, see \cite{cunninghamoptimal}. Cunningham gives an algorithm for it in \cite{cunninghamoptimal}. The algorithm requires $|E|$ minimum-cut calculations.

Run-time complexity for Algorithm \ref{al:hmt-de}: $|V||E| + |E|^2$ minimum-cut calculations. This is due to finding $S_{\si}(G)$ (we use the algorithm in \cite{cunninghamoptimal}, it requires $|V||E|$ minimum-cut calculations) and the number of iterations is $|E|$ where each iteration takes $|E|$ minimum-cut calculations.

\subsection{An algorithm for computing spanning tree modulus}\label{sec:app-mod}

Let $G= (V,E,\si)$ be a weighted graph with edge weights $\si \in \R^E_{>0}$. Let $\cB$ be the family of spanning trees of $G$. In \cite{polynomial}, they propose an algorithm for computing spanning tree modulus based on $E_{ max}$ (defined as in (\ref{eq:emax})) using Cunningham's algorithm for computing the strength of a graph.

Applying results in \cite{huyfulkerson}, we suggest an algorithm for computing spanning tree modulus using Algorithm \ref{al0} based on  $E_{ min}$ (defined as in (\ref{eq:emin})).

Any subgraph which is optimal for the fraction arboricity problem (\ref{eq:def-D}) is said to be a D-optimal subgraph.
First, we find a D-optimal subgraph $H$ of $G$ using Algorithm \ref{al0}. Then,  Theorem \ref{emin} show that $\si^{-1}\eta^*$ takes the value $(|V_H|-1)/\si(E_{H	})$ on all edges in $H$. Now, shrink $H$ to a vertex, this results in a shrunk graph $G_1 = G/H$. Next, find a D-optimal subgraph $H_1$ of $G_1$ and $\si^{-1}\eta^*$ (for the spanning tree modulus of $G$) takes the value $(|V_{H_1}|-1)/\si(E_{H_1})$ on the edges of $H_1$. Repeat this procedure, each time computes $\si^{-1}\eta^*$ for at least one edge. Thus, after finite iterations, $ \si^{-1}\eta^*$ will be computed for all edges. A more detailed description of this algorithm is described in Algorithm \ref{al5}.
\begin{algorithm}
	\caption{Using D-optimal subgraphs to compute spanning tree modulus}\label{al5}
	\hspace*{\algorithmicindent} \textbf{Input:}  $G= (V,E,\si)$
	
	\hspace*{\algorithmicindent} \textbf{Output:} $\eta^*$
	\begin{algorithmic}[1] 
		\WHILE{$G$ is nontrivial}
		\STATE compute $D_{\si}(G)$ and a D-optimal subgraph $H=(V_H,E_H)$
		\FORALL{ $e \in E_H$}
		\STATE $\eta^*(e) \leftarrow \si(e)(D_{\si}(G))^{-1}$
		\ENDFOR
		\STATE $G \leftarrow G/H$
		\ENDWHILE
		\RETURN $\eta^*$
	\end{algorithmic}
\end{algorithm}

Run-time complexity for Algorithm  \ref{al5}: $|V|^3$ minimum-cut calculations. This is because solving $D_{\si}(G)$ takes $|V|^2$ minimum-cut calculations and the number of iterations is at most $|V|$.

\newpage

\bibliographystyle{acm}
\bibliography{main}
\def\cprime{$'$}
\nocite{*}
\end{document}